\documentclass[11pt, letterpaper, oneside]{amsart}
\usepackage{tikz}
\usepackage{hyperref}
\usepackage{amsrefs}
\usepackage{amsthm}
\usepackage{amssymb}
\usepackage{mathtools}
\usepackage{genyoungtabtikz}
\usepackage{float}

\newtheorem{theorem}{Theorem}[section]

\newtheorem{lemma}[theorem]{Lemma}
\newtheorem{proposition}[theorem]{Proposition}
\newtheorem{conjecture}[theorem]{Conjecture}

\theoremstyle{definition}

\newtheorem{example}[theorem]{Example}

\numberwithin{equation}{section}

\begin{document}

\title[Proofs of two perimeter inequalities]{A perimeter analogue of Franklin's identity and an inequality related to the parity of parts}
\author{Philip Cuthbertson}
\address{Department of Mathematical Sciences\\ Michigan Technological University\\ Houghton, MI 49931} \email{pecuthbe@mtu.edu}

\begin{abstract}
    We prove two conjectures regarding partition perimeter inequalities. It was conjectured by Gray, Payne, and Watson that Franklin's partition identity becomes an eventual inequality if one replaces the size of the partition with its perimeter. We prove this conjecture by asymptotic analysis of the corresponding generating functions. Gray, Payne, Swisher, and Watson also conjectured that there is a bias for partitions with fixed perimeter to have more odd parts than even parts. We prove this conjecture by deriving the corresponding generating functions and give a positive recurrence for the coefficients. In the case that there are an equal number of odd parts and even parts we provide a connection to peakless Motzkin paths.
\end{abstract}

\maketitle

\section{Introduction}

In this paper we prove two conjectures regarding perimeter analogues of known partition theorems. We note that during the preparation of this manuscript, after announcement of its subject as a talk at the October AMS Eastern Sectional Meeting, both of these conjectures have been proven by different authors entirely independent of the current work (See \cite{Mccrea}, \cite{Mahanta}). Conjecture \ref{Franklin Conjecture} is the perimeter analogue of Franklin's identity which becomes an inequality instead of equality and Conjecture \ref{Parity Conjecture} is a perimeter analogue of a theorem of Banerjee-Bhattacharjee-Dastidar-Mahanta-Saikia regarding a bias in the number of partitions with distinct parts with an extra constraint on how often even parts and odd parts can show up. Our proof of Conjecture \ref{Franklin Conjecture} is similar to that of \cite{Mccrea}, but we derive a quadrivariate generating function instead of a bivariate. Our proof of Conjecture \ref{Parity Conjecture} is entirely different than that given in \cite{Mahanta}. We derive generating functions for the series involved, give an exact formula for the coefficients, establish a recursion, and prove a connection to Motzkin paths. We believe there is independent value in our approaches to both conjectures.

Define $D_{j, k}(n)$ as the number of partitions of size $n$ with exactly $j$ distinct part sizes that appear at least $k$ times and $O_{j, k}(n)$ as the number of partitions of size $n$ with exactly $j$ distinct part sizes divisible by $k$. Franklin's theorem says that these two values coincide.

\begin{theorem}[Franklin's Identity]
    For $n, j \geq 0$ and $k \geq 2$,
    \[D_{j, k}(n) = O_{j, k}(n).\]
\end{theorem}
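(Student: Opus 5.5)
The plan is to prove the refined identity by comparing bivariate generating functions in which a variable $t$ marks the statistic $j$. It suffices to show that, for fixed $k \ge 2$,
\[
\sum_{n,j\ge 0} D_{j,k}(n)\, t^j q^n \;=\; \sum_{n,j\ge 0} O_{j,k}(n)\, t^j q^n ,
\]
since comparing coefficients of $t^j q^n$ then gives the statement.

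Both series factor over part sizes $m \ge 1$, because the choices of multiplicity for different part sizes are independent.

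\emph{The $O$ side.} A part size $m$ contributes a factor of $t$ exactly when $m$ is divisible by $k$ and it is actually used. So its local factor is
\[
\frac{1}{1-q^m}\quad (k\nmid m), \qquad 1+\frac{t\,q^m}{1-q^m}=\frac{1-(1-t)q^m}{1-q^m}\quad (k\mid m).
\]
The product of all local factors is
\[
\prod_{m\ge 1}\frac{1}{1-q^m}\,\prod_{m\ge 1}\bigl(1-(1-t)q^{km}\bigr).
\]

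\emph{The $D$ side.} A part size $m$ contributes $t$ exactly when its multiplicity is at least $k$. Multiplicities below $k$ give $\frac{1-q^{km}}{1-q^m}$, and multiplicities at least $k$ give $t\,\frac{q^{km}}{1-q^m}$. The local factor is therefore
\[
\frac{1-q^{km}+t\,q^{km}}{1-q^m}=\frac{1-(1-t)q^{km}}{1-q^m}.
\]
Taking the product over all $m$ yields exactly the same expression as on the $O$ side.

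The two generating functions therefore coincide identically in $t$, which proves the theorem. The only delicate point is bookkeeping: the factor $t$ must be attached per distinct part size, not per part. The case $j=0$ is consistent with Glaisher's theorem, which appears as the specialisation $t=0$. If a bijective proof were preferred instead, I would use Glaisher-type splitting: write each multiplicity $f_m = kq + r$ with $0 \le r < k$ and merge $k$ copies of $m$ into a part $km$. However, showing that this map preserves $j$ requires handling of carries that the generating function argument avoids, so I would present the generating function proof.
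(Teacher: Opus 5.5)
Your proof is correct and complete. Since $j$ is a sum of per-part-size indicators, $t^j$ factors over part sizes, and the $D$-side local factor is $\frac{1-(1-t)q^{km}}{1-q^m}$. On the $O$ side, the factors $1-(1-t)q^m$ with $k\mid m$ reindex via $m\mapsto km$ to $\prod_{m\ge1}(1-(1-t)q^{km})$. The two products therefore agree in $\mathbb{Z}[t][[q]]$, where they converge formally because each factor is $1+O(q^m)$.

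The paper does not prove this statement; it quotes Franklin's identity as classical background, so there is no proof to compare against directly. The closest relative is Section 3, which uses the same device, an extra variable $z$ marking $j$, for the perimeter analogues. There $\mu_1+\ell(\mu)-1$ is not additive over parts, so the paper cannot take a product over part sizes with weight $q^m$ per copy. Instead it reads the boundary of the Ferrers diagram as a sequence of blocks: an $E$ step followed by a run of $N$ steps recording that part size's multiplicity. Each non-final block is exactly your local factor with $q^m$ replaced by $q$, times $q$ for the $E$ step; for example, $q\,\frac{1-(1-z)q^k}{1-q}$ on the $D$ side.

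Your argument pinpoints why equality survives for size but not for perimeter. It rests on $(q^m)^k=q^{km}$: $k$ copies of $m$ weigh the same as one copy of $km$, and that is what lets the $D$-product be reindexed into the $O$-product. Under perimeter weighting every copy costs one $N$ step regardless of $m$, so no such reindexing exists. This is why the paper obtains the genuinely different rational functions of Propositions \ref{FD Function} and \ref{FO Function}.

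Finally, you are right not to rely on the bijective aside. Merging as you describe it, once or repeatedly, is not injective: for $k=2$, $(2,1,1)$ and $(1,1,1,1)$ have the same image. A bijective proof of Franklin's refinement therefore needs a more careful construction.
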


In \cite{Gray-Payne-Watson} the authors conjectured that Franklin's Identity becomes an inequality if we replace the size of a partition with its perimeter and proved the case of $k = 2$. If we let $FD_{j, k}(n)$ and $FO_{j, k}(n)$ be the analogous functions by perimeter equal to $n$ then they gave the following conjecture.

\begin{conjecture}[Gray-Payne-Watson]\label{Franklin Conjecture}
    For integers $j \geq 0$ and $k \geq 2$, there is some $N$ such that for all $n \geq N$ we have
    \[FD_{j, k}(n) \geq FO_{j, k}(n).\]
\end{conjecture}

We derive quadrivariate generating functions for the partitions counted by $FD$ and $FO$ and then prove the conjecture using ``singularity analysis'' in the sense of Flajolet-Sedgewick. 

We define $pd_o(n)$ as the number of partitions of size $n$ into distinct parts where there are more odd parts than even parts and $pd_e(n)$ is the number of partitions of size $n$ into distinct parts where there are more even parts than odd parts. In \cite{BBDMS}, Banerjee, Bhattacharjee, Dastidar, Mahanta, and Saikia proved the following theorem regarding the bias of these two counts.

\begin{theorem}[Banerjee-Bhattacharjee-Dastidar-Mahanta-Saikia \cite{BBDMS}]
    For $n \geq 20$,
    \[pd_o(n) > pd_e(n).\]
\end{theorem}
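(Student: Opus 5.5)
The plan is to prove the inequality through a bivariate generating function that tracks the difference between the number of odd and even parts. Let $z$ mark each odd part with weight $z$ and each even part with weight $z^{-1}$. Then
\[
\prod_{k\ge1}(1+zq^{2k-1})(1+z^{-1}q^{2k})=\sum_{m\in\mathbb Z} c_m(q)\,z^m,
\]
and $\sum_n (pd_o(n)-pd_e(n))q^n=\sum_{m>0}(c_m(q)-c_{-m}(q))$.

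First I expand each factor with Euler's identity for distinct parts:
\[
\prod_{k\ge1}(1+zq^{2k-1})=\sum_{r\ge0}\frac{z^rq^{r^2}}{(q^2;q^2)_r},
\qquad
\prod_{k\ge1}(1+z^{-1}q^{2k})=\sum_{s\ge0}\frac{z^{-s}q^{s^2+s}}{(q^2;q^2)_s}.
\]
Extracting $z^{d}$ and $z^{-d}$ for $d\ge1$ gives two sums. For $z^d$ take $r=s+d$; the exponent is $(s+d)^2+s^2+s$. For $z^{-d}$ take the odd index equal to $s$ and the even index equal to $s+d$; the exponent is $s^2+(s+d)^2+s+d$. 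The denominators form the same pair $(q^2;q^2)_s(q^2;q^2)_{s+d}$ in both cases. This yields the key identity
\[
\sum_{n\ge0}(pd_o(n)-pd_e(n))q^n=\sum_{d\ge1}(1-q^d)\sum_{s\ge0}\frac{q^{(s+d)^2+s^2+s}}{(q^2;q^2)_s\,(q^2;q^2)_{s+d}}.
\]

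Second, I split by the parity of $d$. When $d$ is even, $1-q^d$ is one of the factors $1-q^{2i}$ of $(q^2;q^2)_{s+d}$, since $d/2\le s+d$. It cancels, and each even-$d$ summand has nonnegative coefficients. When $d$ is odd this cancellation is unavailable. Instead I write
\[
\frac{1-q^d}{1-q^{2d}}=\frac{1}{1+q^d},
\]
or alternatively pair the factor $1-q^d$ with $1-q^{d+1}$, which does occur in the denominator. This isolates a controlled negative contribution.

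The main obstacle is this odd-$d$ contribution, starting with $d=1$, whose numerator $1-q$ produces genuinely negative coefficients. I would first try to show that for each odd $d$ and each $s$, the negative part of
\[
\frac{q^{(s+d)^2+s^2+s}(1-q^d)}{(q^2;q^2)_s(q^2;q^2)_{s+d}}
\]
is dominated coefficientwise for large $n$. One candidate dominating term is the positive part $q^{(s+d)^2+s^2+s}/\bigl((q^2;q^2)_s(q^2;q^2)_{s+d}\bigr)$ of the same summand, shifted by one. Another is the neighbouring even-$d$ term $d+1$ (or the $s+1$ term). Concretely, I would construct an injection on the partition side. The coefficient of $q^{n}$ in $q^{E}/\bigl((q^2;q^2)_s(q^2;q^2)_{s+d}\bigr)$ counts pairs of even-part partitions with bounded numbers of parts. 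Multiplying by $q^d$ shifts $n$ by $d$, an odd amount, so I would map an object of size $n-d$ to one of size $n$ by enlarging one part appropriately. Parity obstructions arise because all parts are even. These are exactly what forces a threshold; they are absorbed using the extra freedom from the $s+1$ or $d+1$ terms once $n$ is large, plausibly $n\ge 20$.

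Finally, I would make the domination explicit for $n$ beyond a modest bound using the leading exponents $(s+d)^2+s^2+s$. Only finitely many pairs $(s,d)$ contribute below any given $n$, so the check is finite. I would then verify $pd_o(n)>pd_e(n)$ directly for the remaining values $20\le n\le N_0$ by computing the series to that order. Strictness needs separate care: it should follow because the $d=2$, $s=0$ term $q^{4}(1-q^2)/(q^2;q^2)_2=q^4/(1-q^4)$, together with other even-$d$ terms, contributes strictly positive coefficients beyond the slack in the injection.
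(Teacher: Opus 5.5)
The paper gives no proof of this statement. It quotes it from Banerjee, Bhattacharjee, Dastidar, Mahanta and Saikia only as motivation for Conjecture~\ref{Parity Conjecture}, so your proposal has to stand on its own.

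Your first step is correct. The two Euler expansions and the extraction of $z^{\pm d}$ give
\[
\sum_{n\ge0}(pd_o(n)-pd_e(n))q^n=\sum_{d\ge1}\sum_{s\ge0}T_{s,d}(q),\qquad T_{s,d}(q):=\frac{(1-q^d)\,q^{(s+d)^2+s^2+s}}{(q^2;q^2)_s\,(q^2;q^2)_{s+d}}.
\]
This reproduces the values $1,-1,1,0,1,0,0,2$ for $1\le n\le 8$, and the even-$d$ terms are indeed nonnegative after cancelling $1-q^d$. Everything after that, however, is a list of strategies (``I would first try'', ``one candidate'', ``plausibly $n\ge 20$'') rather than an argument. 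The step you leave open --- that the odd-$d$ negative contributions are beaten at every $n\ge 20$ --- is the whole content of the theorem.

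The routes you sketch also do not work as stated.

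\textbf{No summand can dominate itself.} For odd $d$ the denominator of $T_{s,d}$ is a series in $q^2$, and $E=(s+d)^2+s^2+s\equiv s+1\pmod 2$. So the positive part $q^{E}/D$ and the negative part $-q^{E+d}/D$ live on opposite parities of $n$. Hence $T_{s,d}$ has a strictly negative coefficient at every $n\equiv s\pmod 2$ with $n\ge E+d$, growing like $n^{2s+d-1}$ for fixed $(s,d)$. Already $T_{0,1}=q/(1+q)$ gives $-1$ at every even $n$. So your first candidate (a summand's own positive part, shifted) is impossible, and compensation must come from other pairs $(s',d')$.

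\textbf{Per-pair domination gives no uniform threshold.} The number of pairs with $E\le n$ grows linearly in $n$, and any positive term you borrow is wanted by many negative ones. So showing each negative term is ``eventually dominated'' and then checking finitely many $n$ does not produce a single $N_0$. You need one comparison valid for all $n\ge N_0$ at once: either an explicitly defined injection from the partitions counted by $pd_e(n)$ into those counted by $pd_o(n)$, or a coefficientwise matching in which each negative $T_{s,d}$ is paired with positive terms used by no other.

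\textbf{The injection is not defined.} ``Enlarge one part appropriately,'' with the parity obstruction ``absorbed'' by neighbouring terms, is not a map. Nothing shows it is injective, that images from different $(s,d)$ are disjoint, or that the threshold is $20$.

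\textbf{The strictness remark fails.} $q^4/(1-q^4)$ is positive only for $n\equiv 0\pmod 4$.

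Finally, the paper's perimeter argument does not transfer. There the kernel $1/(1-q^2(1+zq)(1+z^{-1}q))$ is invariant under $z\leftrightarrow z^{-1}$, which makes $\sum_{k\ge1}(F_k-F_{-k})$ telescope to the single series $(q-q^2)(G_0+G_1)$, whose positivity then follows from a recurrence. Your double sum over $(s,d)$ has no such collapse, which is exactly why a genuinely new inequality or injection is required.
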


In \cite{Gray-Payne-Swisher-Watson} Gray, Payne, Swisher, and Watson conjectured that a similar inequality holds for the corresponding functions with size replaced by perimeter.

\begin{conjecture}[Gray-Payne-Swisher-Watson]\label{Parity Conjecture}
    For all $n \geq 9$,
    \[rd_o(n) > rd_e(n).\]
\end{conjecture}

In fact, $rd_o(n) \geq rd_e(n)$ for all $n \neq 2$. We prove this conjecture by establishing a recurrence relation for the difference of these two quantities and then the positivity comes from induction. In \cite{Mahanta}, this was proven by defining an injection from one side of the inequality to the other. We also derive a formula for these functions in terms of a binomial sum and draw a connection to Motzkin paths.

In section 2 we recall some basics about integer partitions, a few analytic tools we will be using, and the notation we will be using, section 3 will be dedicated to a proof of Conjecture \ref{Franklin Conjecture}, and section 4 will be for the proof of Conjecture \ref{Parity Conjecture}.

\section{Background}

An \textit{integer partition}, $\mu = (\mu_1, \mu_2, \ldots, \mu_m)$, is a weakly decreasing sequence of positive integers. Each of the $\mu_i$ will be referred to as the \textit{parts} of $\mu$ and $\ell(\mu) := m$ is called the length. Throughout we are primarily concerned with the \textit{perimeter} of a partition which is defined by $P(\mu) := \mu_1 + \ell(\mu) - 1$ and has been a very active topic of study in partition theory. We will also make use of the \textit{Ferrers diagram} of a partition which is stack of left justified boxes such that the $i$th row has $\mu_i$ boxes. Since the perimeter of a partition is often defined in terms of a hook length, we occasionally refer to $\mu_1$ as the arm length and $\ell(\mu)$ as the leg length of the perimeter. In the proof of Conjecture \ref{Franklin Conjecture} we will be relying on the following theorem that gives an asymptotic expansion for rational functions in terms of their dominant singularities and a well known inequality.

\begin{theorem}[Corrected from IV.9 / IV.26 in \cite{Flajolet-Sedgewick}]\label{Asymptotic Formula}
    If $f(z)$ is a rational function that is analytic at zero and has a single dominant pole $\alpha$, with multiplicity $r$, then
    \[[z^n]f(z) = C\left(\frac{(-1)^r n^{r - 1}}{(r - 1)! \alpha^{n + r}}\right)\left(1 + O\left(\frac{1}{n}\right)\right)\]
    where 
    \[C :=\lim_{z\to\alpha}(z-\alpha)^r f(z).\]
\end{theorem}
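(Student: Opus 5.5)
The plan is to prove the statement directly from the partial fraction decomposition of $f$. Write $f(z) = P(z)/Q(z)$ in lowest terms. Since $f$ is analytic at $0$, every pole is nonzero. By hypothesis $\alpha$ is the unique pole of minimal modulus, so every other pole $\beta$ satisfies $|\beta| > |\alpha|$. Over $\mathbb{C}$ we decompose
\[
f(z) = E(z) + \sum_{j=1}^{r} \frac{c_{\alpha,j}}{(z-\alpha)^j} + \sum_{\beta \neq \alpha}\sum_{j=1}^{r_\beta} \frac{c_{\beta,j}}{(z-\beta)^j},
\]
where $E$ is a polynomial and $r_\beta$ is the multiplicity of the pole $\beta$. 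Multiplying by $(z-\alpha)^r$ and letting $z \to \alpha$ kills every term except the one with $j=r$. Hence $c_{\alpha,r} = \lim_{z\to\alpha}(z-\alpha)^r f(z) = C$, and $C \neq 0$ because $\alpha$ is a pole of exact order $r$.

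Next I would extract coefficients term by term. For any $\gamma \neq 0$ and $j \geq 1$, the generalized binomial series gives
\[
\frac{1}{(z-\gamma)^j} = \frac{(-1)^j}{\gamma^j}\left(1 - \frac{z}{\gamma}\right)^{-j},
\qquad
[z^n]\frac{1}{(z-\gamma)^j} = \frac{(-1)^j}{\gamma^{n+j}}\binom{n+j-1}{j-1}.
\]
For the leading term $j = r$, $\gamma = \alpha$, the identity $\binom{n+r-1}{r-1} = \frac{n^{r-1}}{(r-1)!}\left(1 + O(1/n)\right)$ (a polynomial in $n$ of degree $r-1$ with leading coefficient $1/(r-1)!$) yields exactly the main term $C\,\frac{(-1)^r n^{r-1}}{(r-1)!\,\alpha^{n+r}}\left(1+O(1/n)\right)$.

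Finally I would show that everything else is absorbed into the relative error $O(1/n)$.
\begin{itemize}
\item The polynomial $E$ contributes nothing once $n > \deg E$.
\item The terms $c_{\alpha,j}/(z-\alpha)^j$ with $j < r$ contribute $O\!\left(n^{j-1}|\alpha|^{-n}\right) = O\!\left(n^{r-2}|\alpha|^{-n}\right)$. This is $O(1/n)$ relative to the main term, since $|\alpha^{n+r}|$ differs from $|\alpha|^{n}$ only by a constant and $C \neq 0$.
\item Each term attached to a pole $\beta \neq \alpha$ contributes $O\!\left(n^{r_\beta-1}|\beta|^{-n}\right)$. Relative to the main term this is $O\!\left(n^{r_\beta - r}\,(|\alpha|/|\beta|)^{n}\right)$, which decays exponentially because $|\alpha/\beta| < 1$. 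In particular it is $O(1/n)$.
\end{itemize}
There are finitely many such terms, so summing them gives the claim.

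There is no real obstacle; the only points needing care are bookkeeping. First, the relative error uses $C \neq 0$, which is why the order of the pole must be exactly $r$. Second, the sign and the power $\alpha^{n+r}$ must be tracked correctly when rewriting $(z-\alpha)^{-r}$ as $(-\alpha)^{-r}(1-z/\alpha)^{-r}$. This normalization is the "correction" relative to the form stated in Flajolet--Sedgewick.
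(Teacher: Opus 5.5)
Your proof is correct. It runs through the partial-fraction decomposition, the identification $c_{\alpha,r}=C\neq 0$, and the formula $[z^n](z-\gamma)^{-j}=(-1)^j\binom{n+j-1}{j-1}\gamma^{-n-j}$; it then absorbs the lower-order terms at $\alpha$ (relative size $O(n^{j-r})$) and the other poles (relative size exponentially small) into the factor $1+O(1/n)$. The paper does not prove this statement itself but cites Flajolet--Sedgewick (Theorem IV.9), whose argument is this same partial-fraction expansion, so your route is essentially the cited one, written in the $(z-\alpha)^r$ normalization the paper uses.
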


\begin{lemma}[Bernoulli's Inequality]\label{Bernoulli}
    For $r \geq 1$ and $x \geq -1$,
    \[(1 + x)^r \geq 1 + rx.\]
\end{lemma}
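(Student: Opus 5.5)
We will prove Bernoulli's inequality by induction on $r$, treating $r$ as a positive integer, which is how it is used later in the paper. If real exponents $r \geq 1$ are wanted, we will instead use a convexity argument, given at the end.

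For the base case $r = 1$, both sides equal $1 + x$, so there is nothing to prove. For the inductive step, suppose $(1+x)^r \geq 1 + rx$ for some $r \geq 1$. The hypothesis $x \geq -1$ gives $1 + x \geq 0$, so we may multiply the inductive hypothesis by $1+x$ without reversing the inequality:
\[
(1+x)^{r+1} \geq (1+rx)(1+x) = 1 + (r+1)x + rx^2 \geq 1 + (r+1)x,
\]
since $rx^2 \geq 0$. This completes the induction.

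The only point that needs care is the sign of the multiplier $1+x$. This is exactly where the hypothesis $x \geq -1$ enters, so that step is the crux, although it is immediate.

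For real $r \geq 1$, we will consider $g(x) = (1+x)^r - 1 - rx$ on $[-1,\infty)$. Its derivative is $g'(x) = r\bigl((1+x)^{r-1} - 1\bigr)$. Since $r - 1 \geq 0$, the map $t \mapsto t^{r-1}$ is nondecreasing on $t \geq 0$, so $g'(x) \leq 0$ for $-1 \leq x \leq 0$ and $g'(x) \geq 0$ for $x \geq 0$. Hence $g$ has a global minimum at $x = 0$ on this interval, and $g(0) = 0$, which gives $g \geq 0$.
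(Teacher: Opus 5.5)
Your proof is correct. The paper gives no proof of this lemma; it simply cites Bernoulli's inequality as well known, so there is no argument of the paper's to compare yours against, and your write-up supplies what the paper takes for granted.

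Your induction on integer $r$ is enough for the paper's only use of the lemma. That use is the step $(1-\alpha_k^k)^{k-1}\ge 1-(k-1)\alpha_k^k$ in Proposition~\ref{alpha beta inequality}, where $r=k-1\ge 2$ is an integer and $x=-\alpha_k^k\in(-1,0)$. You also correctly identify the one place the hypothesis $x\ge -1$ matters: $1+x\ge 0$ must hold before you multiply through.

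Your derivative argument proves the lemma as literally stated, with real $r\ge 1$, and it is also sound. Two small points of rigor tidy it up:
\begin{itemize}
\item When $r=1$, the formula $g'(x)=r((1+x)^{r-1}-1)$ involves $0^0$ at $x=-1$. This is harmless, because $g\equiv 0$ in that case.
\item When $r>1$, say explicitly that $g$ is continuous on $[-1,\infty)$ and differentiable on $(-1,\infty)$. That is all the mean-value-theorem step needs to conclude that $g$ is nonincreasing on $[-1,0]$ and nondecreasing on $[0,\infty)$.
\end{itemize}
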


Later we draw a connection to peakless Motzkin paths so we recall their definitions here. A \textit{Motzkin path} of length $n$ is a lattice path from $(0, 0)$ to $(n, 0)$ that only takes steps $U = (1, 1), H = (1, 0),$ and $D = (1, -1)$ and does not travel below the $x$-axis. A \textit{peakless Motzkin path} is a Motzkin path that has no $U$ step that is immediately followed by a $D$ step. With slight abuse of notation we will often treat the Motzkin path and the word of $U$'s, $H$'s, and $D$'s naturally corresponding to the path synonymously. We will also need the following lemma.

\begin{lemma}[Raney's Lemma \cite{Raney}, Section 7.5 in \cite{ConcreteMathematics}]\label{Raney}
    If $a_1, a_2, \ldots a_n$ is a sequence of integers whose sum is 1, then exactly one cyclic permutation of $a_1, a_2, \ldots a_n$ has every partial sum positive.
\end{lemma}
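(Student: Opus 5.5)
The plan is to pass to the bijection between integer sequences and lattice paths. I read the sequence $a_1,\dots,a_n$ as a walk from height $0$ to height $1$, with partial sums $s_k=a_1+\cdots+a_k$ for $0\le k\le n$, so $s_0=0$ and $s_n=1$. Starting the cyclic shift at position $j+1$ produces the partial sums
\[
s_{j+i}-s_j \quad (i\le n-j), \qquad 1+s_{i-(n-j)}-s_j \quad (i>n-j).
\]
So the shift at $j$ has all partial sums positive exactly when two conditions hold:
\begin{itemize}
\item $s_m>s_j$ for every $m>j$;
\item $s_m\ge s_j$ for every $m<j$, since there the condition reads $1+s_m-s_j>0$ and the values are integers.
\end{itemize}

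\textbf{Existence.} Take $j$ to be the \emph{last} index in $\{0,\dots,n-1\}$ at which the minimum of $s_0,\dots,s_{n-1}$ is attained.
\begin{itemize}
\item For $m<j$ we have $s_m\ge s_j$ by minimality.
\item For $j<m\le n-1$ we have $s_m>s_j$ because $j$ is the last index attaining the minimum.
\item For $m=n$ we have $s_n=1>0=s_0\ge s_j$.
\end{itemize}
Hence this shift has every partial sum positive.

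\textbf{Uniqueness.} Suppose $j$ is any index in $\{0,\dots,n-1\}$ satisfying both conditions. Together they give $s_m\ge s_j$ for all $m\le n-1$, so $s_j$ is the minimum. The strict condition for $m>j$ forces $j$ to be the last index achieving it. So $j$ is determined.

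One subtlety is that distinct starting positions may yield the same sequence when the sequence is periodic. This is ruled out because the sum is $1$: a sequence with period $d\mid n$, $d<n$, would have total sum $(n/d)\cdot(\text{block sum})$, which cannot equal $1$ when $n/d>1$. So the $n$ cyclic shifts are distinct, and "exactly one cyclic permutation" is unambiguous.

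The only real care needed is the strict-versus-weak inequality bookkeeping, which the integrality step ($1+s_m-s_j>0 \iff s_m\ge s_j$) handles. Nothing here is expected to be an obstacle.
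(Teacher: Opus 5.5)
The paper does not prove this lemma. It quotes it from Raney and from Section 7.5 of Concrete Mathematics, and the only related argument in the paper is the reduction of Lemma \ref{Negative Raney} to it by reversing and negating the sequence. Your proof is correct and self-contained. It is the discrete form of the standard proof in the cited source: there one extends the sequence periodically and takes the lowest line of slope $1/n$ meeting the graph of partial sums. That line touches the graph exactly once per period, and your last minimizer of $s_0,\dots,s_{n-1}$ is precisely the touching point.

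There are two small precision points. First, the rotated partial sums literally give the condition $s_m\ge s_j$ only for $1\le m\le j$. The case $m=0$, which you use in the uniqueness step, follows from the condition at $m=n$: since $1=s_n>s_j$, integrality gives $s_j\le 0=s_0$. Second, the periodicity paragraph is harmless but unnecessary. Once exactly one starting index works, at most one rotated sequence can work, whether or not different rotations coincide.

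As a side remark, the same bookkeeping proves Lemma \ref{Negative Raney} directly, bypassing the reversal argument. Take the \emph{first} index $j\in\{1,\dots,n\}$ minimizing $b_1+\cdots+b_j$, and start the rotation at $b_{j+1}$ with indices taken mod $n$.
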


In particular we will need the following form which we prove is equivalent.

\begin{lemma}\label{Negative Raney}
    If $b_1, b_2, \ldots b_n$ is a sequence of integers whose sum is $-1$, then exactly one cyclic permutation of $b_1, b_2, \ldots b_n$ has every strict partial sum non-negative.
\end{lemma}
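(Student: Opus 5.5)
We derive Lemma \ref{Negative Raney} from Raney's Lemma (Lemma \ref{Raney}) by negating the sequence and reversing it. Given $b_1, \ldots, b_n$ with sum $-1$, set $a_i := -b_{n+1-i}$ for $1 \le i \le n$. Then $a_1, \ldots, a_n$ is a sequence of integers with sum $1$, so by Lemma \ref{Raney} exactly one cyclic permutation of the $a_i$ has all partial sums positive. Reversal and negation both carry cyclic permutations to cyclic permutations: the cyclic shift of $a$ starting at $a_j$ corresponds to a cyclic shift of $b$. This correspondence is a bijection between the $n$ cyclic shifts of $a$ and the $n$ cyclic shifts of $b$.

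The key step is to translate the positivity condition. Let $c_1, \ldots, c_n$ be a cyclic shift of $b$ with total sum $-1$, and let $d_i := -c_{n+1-i}$ be the corresponding shift of $a$. For $1 \le m \le n$ the partial sum $d_1 + \cdots + d_m$ equals $-(c_{n-m+1} + \cdots + c_n)$, which is the negative of a suffix sum of $c$. Since the total is $-1$, we have
\[c_{n-m+1} + \cdots + c_n = -1 - (c_1 + \cdots + c_{n-m}),\]
so
\[d_1 + \cdots + d_m = 1 + (c_1 + \cdots + c_{n-m}).\]
Hence $d_1 + \cdots + d_m > 0$ if and only if $c_1 + \cdots + c_{n-m} \ge 0$. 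As $m$ ranges over $1, \ldots, n$, the index $n-m$ ranges over $0, \ldots, n-1$. The case $m = n$ gives the empty sum, which is $0$ and so holds automatically, matching the positive total $1$ of $d$. The remaining cases $m = 1, \ldots, n-1$ give exactly the strict partial sums of $c$.

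Therefore the shift $d$ of $a$ has all partial sums positive if and only if the shift $c$ of $b$ has all strict partial sums non-negative. Since this equivalence holds under a bijection of cyclic shifts, uniqueness and existence both transfer from Lemma \ref{Raney}. The converse direction, showing the two lemmas are equivalent, follows from the same involution applied in reverse.

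The only delicate point is the bookkeeping of the index reversal and of which sums count as "strict". If the paper's notion of cyclic permutation counts positions rather than distinct sequences, the bijection is still on positions, so there is no issue with repeated values.
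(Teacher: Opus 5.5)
Your proof is correct and takes the same approach as the paper. You reverse and negate the sequence to get one with sum $1$, apply Lemma \ref{Raney}, and use the total $-1$ to turn positive prefix sums of the transformed sequence into non-negative strict prefix sums of the original; you start directly from an arbitrary $b$ and prove the condition as an equivalence under an explicit bijection of cyclic shifts, which makes the transfer of uniqueness a bit more explicit than the paper's version.
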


\begin{proof}
    Let $(a_1, a_2, \ldots, a_n)$ be a sequence of integers whose total sum is 1 and define a new sequence $(b_1, b_2, \ldots, b_n)$ by $b_i = -a_{n + 1 - i}$. Notice that
    \[\sum_{i = 1}^n b_i = \sum_{i = 1}^n -a_{n + 1 - i} = - \sum_{i = 1}^n a_i = -1.\]
    By Lemma \ref{Raney}, we can cyclically shift $(a_1, a_2, \ldots, a_n)$ in a unique way such that each partial sum is positive. Let $(a_1', a_2', \ldots, a_n')$ be that rotation so that $b_i' = -a_{n + 1 - i}'$. Notice that for $1 \leq i \leq n$
    \[a_1' + a_2' + \cdots + a_i' > 0\]
    implies that
    \[b_n' + b_{n-1}' + \cdots + b_{n + 1 - i}' = \sum_{i = 1}^n b_i' - (b_1' + b_2' + \cdots + b_{n - i}') < 0.\]
    The total sum is $-1$, so we obtain
    \[-1 < b_1' + b_2' + \cdots + b_i'\]
    which is the same as the strict partial sums being non-negative. The uniqueness of $(b_1', b_2', \ldots, b_n')$ follows from the uniqueness of $(a_1', a_2', \ldots, a_n')$. Clearly we can also go in the opposite direction to show that these two statements are equivalent.
\end{proof}

\section{Proof of Conjecture \ref{Franklin Conjecture}}

We will first derive the corresponding generating functions using the same kind of technique used in Gray-Payne-Watson \cite{Gray-Payne-Watson} which itself is similar to that of Fu-Tang \cite{Fu-Tang}.

\begin{proposition}\label{FD Function}
    We have
    \begin{flalign*}
        \mathcal{FD}_k(z, q) &:= \sum_{j = 0}^\infty \sum_{n = 0}^\infty FD_{j, k}(n) z^j q^n =  \frac{q - (1 - z) q^k}{1 - 2q + (1 - z)q^{k + 1}}\\
        &= \frac{q - q^k}{1 - 2q + q^{k + 1}} + (1 - q)^2 \sum_{j = 1}^\infty \frac{q^{jk + j - 1}}{(1 - 2q + q^{k + 1})^{j + 1}}z^j.
    \end{flalign*}
\end{proposition}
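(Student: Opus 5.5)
The plan is to build each partition from its largest part and the multiplicities of every part size below it. A nonempty partition $\mu$ with largest part $m=\mu_1$ is determined by its multiplicities $c_1,\dots,c_m$, where $c_m\ge 1$ and $c_i\ge 0$ for $i<m$. Its length is $\ell(\mu)=\sum_i c_i$, so
\[P(\mu)=m-1+\sum_{i=1}^m c_i .\]
The weight $q^{P(\mu)}z^{j}$ therefore factors over part sizes. Here $j$ is the number of sizes $i$ with $c_i\ge k$. Each size $i<m$ contributes one factor $q$ from the arm together with $q^{c_i}$, and the size $m$ contributes only $q^{c_m}$.

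First I compute the two local factors. A size $i<m$ contributes
\[q\Bigl(\sum_{c=0}^{k-1}q^c+z\sum_{c\ge k}q^c\Bigr)=\frac{q\bigl(1-(1-z)q^k\bigr)}{1-q}=:qA .\]
The largest size contributes
\[\sum_{c=1}^{k-1}q^c+z\sum_{c\ge k}q^c=\frac{q-(1-z)q^k}{1-q}=:B .\]
Summing over $m\ge 1$ gives the geometric series $B\sum_{m\ge1}(qA)^{m-1}=B/(1-qA)$. Here I use the paper's convention for the empty partition, which contributes nothing to the constant term; that matches the formula, since its numerator vanishes at $q=0$. Clearing the common factor $1-q$ gives
\[1-qA=\frac{1-2q+(1-z)q^{k+1}}{1-q},\]
which yields the first closed form.

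For the second form I set $D:=1-2q+q^{k+1}$. The first closed form can then be written as
\[\frac{(q-q^k)+zq^k}{D-zq^{k+1}},\]
and I expand $1/(D-zq^{k+1})=\sum_{j\ge0}z^jq^{(k+1)j}/D^{j+1}$ as a power series in $z$.
\begin{itemize}
\item The $z^0$ coefficient is $(q-q^k)/D$.
\item For $j\ge1$, the coefficient of $z^j$ is
\[\frac{(q-q^k)q^{(k+1)j}}{D^{j+1}}+\frac{q^k\,q^{(k+1)(j-1)}}{D^{j}}=\frac{q^{(k+1)j-1}\bigl(q^2-q^{k+1}+D\bigr)}{D^{j+1}} .\]
Since $q^2-q^{k+1}+1-2q+q^{k+1}=(1-q)^2$, this equals $(1-q)^2q^{jk+j-1}/D^{j+1}$, as claimed.
\end{itemize}

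The only delicate point is the bookkeeping in the combinatorial step. One must give the largest size the requirement $c_m\ge1$ and omit its arm factor, so that the exponent is exactly $m-1+\ell$. One must also check that a size with $c_i\ge k$ receives exactly one factor of $z$, whether it is the largest size or not. Once this is checked, the rest is routine algebra.
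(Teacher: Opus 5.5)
Your proposal is correct and follows essentially the same route as the paper. The paper's profile blocks (an $E$ step followed by a run of $N$ steps, with the final block required to contain at least one $N$) are exactly your per-size factors $qA$ and $B$, summed as the same geometric series; the paper differs only cosmetically in first tracking arm and leg lengths with variables $x,y$ before setting $x=y=1$, and it obtains the second form by the same expansion in $z$ and recombination of the two contributions to each $z^j$.
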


\begin{proof}
    We begin by establishing a quadrivariate generating function and then specialize. Define
    \[\mathcal{FD}_k (x, y, z, q) := \sum_{j = 0}^\infty\sum_{\lambda = 0}^\infty\sum_{\alpha = 0}^\infty FD_{j, k}(\alpha, \lambda) x^\alpha y^\lambda z^j q^{\alpha + \lambda - 1}\]
    where $FD_{j, k}(\alpha, \lambda)$ is the number of partitions counted by $FD_{j, k}(n)$ where the perimeter has arm length $\alpha$ and leg length $\lambda$. We build a partition by keeping track of the East and North moves along the profile. We must start with an $E$ followed by some, potentially 0, amount of $N$'s noting that if there are at least $k$ steps North then we have to increment $z$. This contributes
    \[xq (1 + yq + (yq)^2 + \ldots + (yq)^{k - 1} + z(yq)^k + z(yq)^{k + 1} + \ldots) = \frac{xq(1 - (yq)^k (1 - z))}{1 - yq}.\]
    This block can be repeated any number of times, but the last block will be slightly different since it must have at least one $N$ move. The final block is generated by
    \[xyq (1 + yq + (yq)^2 + \ldots + (yq)^{k - 2} + z(yq)^{k - 1} + z(yq)^k + \ldots) = \frac{xyq(1 - (yq)^{k - 1} (1 - z))}{1 - yq}.\]
    All combined we get the quadrivariate generating function to be 
    \[\frac{xyq(1 - (yq)^{k - 1} (1 - z))}{1 - yq} \left( 1 + \frac{\frac{xq(1 - (yq)^k (1 - z))}{1 - yq}}{1-\frac{xq(1 - (yq)^k (1 - z))}{1 - yq}}\right).\]
    \[=\frac{xyq\left(1 - (yq)^{k - 1}(1 - z)\right)}{1 - yq - xq\left(1 - (yq)^k(1 - z)\right)}\]
    After setting $x = y = 1$ one gets
    \[\frac{q\left(1 - q^{k - 1}(1 - z)\right)}{1 - q - q \left(1 - q^k(1 - z)\right)}\]
    % \[\frac{q(1 - q^{k - 1} (1 - z))}{1 - q} \left( 1 + \frac{\frac{q(1 - q^k (1 - z))}{1 - q}}{1-\frac{q(1 - q^k (1 - z))}{1 - q}}\right).\]
    Now we expand as a geometric series to get the second equality.
    \begin{flalign*}
        \frac{q - (1 - z) q^k}{1 - 2q + (1 - z)q^{k + 1}} &= \frac{q - q^k + z q^k}{1 - 2q + q^{k + 1} - zq^{k + 1}}\\
        &= \frac{q - q^k + z q^k}{1 - \frac{zq^{k + 1}}{1 - 2q + q^{k + 1}}} \cdot \frac{1}{1 - 2q + q^{k + 1}} = \sum_{j = 0}^\infty \frac{(q - q^k + z q^k)q^{j(k + 1)}}{(1 - 2q + q^{k + 1})^{j + 1}}z^j\\
        &= \sum_{j = 0}^\infty \left(\frac{(q - q^k)q^{j(k + 1)}}{(1 - 2q + q^{k + 1})^{j + 1}}z^j + \frac{q^{(j + 1)(k + 1) - 1}}{(1 - 2q + q^{k + 1})^{j + 1}}z^{j + 1}\right)\\
        &= \frac{q - q^k}{1 - 2q + q^{k + 1}} \\
        &+ \sum_{j = 1}^\infty \left(\frac{(q - q^k)q^{j(k + 1)}}{(1 - 2q + q^{k + 1})^{j + 1}}z^j + \frac{q^{j(k + 1) - 1}(1 - 2q + q^{k + 1})}{(1 - 2q + q^{k + 1})^{j + 1}}z^{j}\right).
    \end{flalign*}
    % \textcolor{red}{Pulling out the $z^0$ term then recombining we get the proposition.}
    Combining the two summands and factoring gives the proposition.
\end{proof}

\begin{proposition}\label{FO Function}
    We have
    \begin{flalign*}
        \mathcal{FO}_k(z, q) &:= \sum_{j = 0}^\infty \sum_{n = 0}^\infty FO_{j, k}(n) z^j q^n = \frac{q(1-q)^k + q^{k+1}(1 - 2z) - q^k(1 - z)}{(1 - 2q)((1 - q)^k - q^k(1 - q + zq))}\\
        =& \frac{q((1-q)^{k-1}-q^{k-1})}{(1-2q)((1-q)^{k-1}-q^k)} \\
        &+ \sum_{j = 1}^\infty \frac{q^{jk+j-1}((1-q)^k-q^k)}{(1-2q)(1-q)^{j-1}((1-q)^{k-1}-q^k)^{j+1}}z^j.
    \end{flalign*}
\end{proposition}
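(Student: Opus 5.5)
The plan is to run the same profile-encoding argument as in Proposition \ref{FD Function}, with one change. The statistic now depends on the \emph{position} of each $E$ step modulo $k$, not on the length of the run of $N$'s that follows it. Read the boundary of the Ferrers diagram as a word $E N^{a_1} E N^{a_2} \cdots E N^{a_m}$ with $a_i \ge 0$ and $a_m \ge 1$. Here $m=\mu_1$ and $\sum a_i = \ell(\mu)$, so the word has length $P(\mu)+1$. Each letter then carries a factor $q$, and we multiply by $q^{-1}$ overall, exactly as in the definition of the quadrivariate series. The part size $s$ occurs in $\mu$ precisely when $a_s \ge 1$. Hence the number of distinct part sizes divisible by $k$ equals the number of indices $s \equiv 0 \pmod k$ with $a_s \ge 1$. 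I will keep the variables $x,y$ for arm and leg as before, and set $x=y=1$ at the end.

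\textbf{Grouping into blocks of $k$.} I group the $E$ steps into consecutive blocks of $k$. Write $u = yq/(1-yq)$ for a nonempty run of $N$'s and $1/(1-yq)$ for an arbitrary run.
\begin{itemize}
\item A \emph{full block} consists of $k-1$ steps $E N^{a}$ with unrestricted $a$, followed by one step $E N^{a}$ at a position divisible by $k$, which is marked by $z$ when $a \ge 1$. It contributes
\[ B = \Bigl(\frac{xq}{1-yq}\Bigr)^{k-1} xq\,(1 + z u). \]
\item The \emph{final block} ends with the last $E$, at position $m = tk + r$ with $1 \le r \le k$. It contributes $r-1$ unrestricted steps and then a forced nonempty run. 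That run carries $z$ exactly when $r = k$. Its generating function is
\[ F = \sum_{r=1}^{k-1} \Bigl(\frac{xq}{1-yq}\Bigr)^{r-1} xq\,u \;+\; \Bigl(\frac{xq}{1-yq}\Bigr)^{k-1} xq\, z u . \]
The first sum is a finite geometric sum in $xq/(1-yq)$.
\end{itemize}
The full series is $q^{-1} F/(1-B)$. Setting $x=y=1$ gives $q/(1-q)$ as the ratio of the geometric sum and turns the denominator into
\[ 1 - \frac{q^k\bigl(1-q+zq\bigr)}{(1-q)^k}. \]
This already explains the factor $(1-q)^k - q^k(1-q+zq)$ in the claimed denominator. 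The remaining factor $1-2q$ comes from summing the geometric series $\sum_r (q/(1-q))^{r-1}$, since $1 - q/(1-q) = (1-2q)/(1-q)$.

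\textbf{Expansion in $z$.} For the second equality I follow the computation in Proposition \ref{FD Function}. Rewrite the denominator as
\[ \bigl((1-q)^{k-1} - q^k\bigr)(1-q) - z q^{k+1}, \]
and expand
\[ \frac{1}{1 - \dfrac{z q^{k+1}}{(1-q)\bigl((1-q)^{k-1}-q^k\bigr)}} \]
as a geometric series in $z$. The numerator is affine in $z$, so each coefficient of $z^j$ with $j \ge 1$ collects two contributions: the constant part of the numerator times the $j$-th power, and the $z$-part times the $(j-1)$-st power. I then combine these over the common denominator
\[ (1-2q)(1-q)^{j-1}\bigl((1-q)^{k-1}-q^k\bigr)^{j+1}. \]
Separately, the $z^0$ term should reduce to
\[ \frac{q\bigl((1-q)^{k-1}-q^{k-1}\bigr)}{(1-2q)\bigl((1-q)^{k-1}-q^k\bigr)}. \]

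\textbf{Main obstacle.} I expect the hard part to be the algebraic simplification: first bringing $q^{-1}F/(1-B)$ into the stated closed form, and then checking that the two contributions to the coefficient of $z^j$ collapse to the single numerator $q^{jk+j-1}\bigl((1-q)^k-q^k\bigr)$. The natural step is to factor $(1-q)^{k-1}-q^k$ out of the constant-term numerator. I would verify the result on $k=2$ and small $j$ against a direct count, and against the $k=2$ case from \cite{Gray-Payne-Watson}, to guard against off-by-one errors in the treatment of the final block.
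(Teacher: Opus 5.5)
Your proposal is correct and follows essentially the paper's proof: your full block $B$ is the paper's $a^{k-1}b$, and at $x=y=1$ your final block with the global factor $q^{-1}$ becomes the paper's $(a+\cdots+a^{k-1})+a^{k-1}c$. The second equality then comes from the same geometric expansion in $z$, where the two contributions to $z^j$ merge via $q^{k+1}N_0+q^k(1-2q)(1-q)\bigl((1-q)^{k-1}-q^k\bigr)=q^k(1-q)^2\bigl((1-q)^k-q^k\bigr)$ with $N_0=q(1-q)^k-q^k+q^{k+1}$.

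One correction to your hint: the factor to pull out of the $z^0$ numerator is $1-q$, since $N_0=q(1-q)\bigl((1-q)^{k-1}-q^{k-1}\bigr)$, and this cancels the $1-q$ you split off the denominator; $(1-q)^{k-1}-q^k$ does not divide $N_0$.
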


\begin{proof}
    We begin by establishing a quadrivariate generating function and then specialize. Define
    \[\mathcal{FO}_k(x, y, z, q) := \sum_{j = 0}^\infty\sum_{\lambda = 0}^\infty\sum_{\alpha = 0}^\infty FO_{j, k}(\alpha, \lambda) x^\alpha y^\lambda z^j q^{\alpha + \lambda - 1}\]
    where $FO_{j, k}(\alpha, \lambda)$ is the number of partitions counted by $FO_{j, k}(n)$ where the perimeter has arm length $\alpha$ and leg length $\lambda$. For notational convenience define the following 
    \begin{flalign*}
        a &:= \frac{xq}{1 - yq}\\
        b &:= xq\left(1 + \frac{zyq}{1 - yq}\right)\\
        c &:= \frac{xyzq}{1 - yq}.
    \end{flalign*}
    Now similar to \ref{FD Function} we start with an $E$ followed by an arbitrary number of $N$'s, but now we need to keep track of the number of $E$'s. Our typical block will be counted by $a$ and we use a $b$ or a $c$ whenever we have a multiple of $k$ previous blocks. We use $c$ when this is the last block and use $b$ when it is not the last block. This is going to be generated by
    \begin{flalign*}
        (a + &\ldots + a^{k - 1}) + a^{k - 1} c + (a + \ldots + a^{k - 1}) a^{k - 1} b + a^{2(k - 1)} b c + \ldots \\
        &= \frac{a - a^k}{1 - a}\left(1 + a^{k - 1}b + a^{2(k - 1)}b + \ldots\right) + a^{k - 1} c + a^{2(k - 1)} bc + \ldots \\
        &= \frac{a - a^k}{1 - a}\left(\frac{1}{1 - a^{k - 1}b}\right) + ca^{k - 1}\left(\frac{1}{1 - a^{k - 1}b}\right)\\
        &= \frac{a + ca^{k - 1} - ca^k - a^k}{(1 - a)(1 - a^{k - 1}b)}.
    \end{flalign*}
    After setting $x = y = 1$, we obtain
    \begin{flalign*}
        &\frac{(\frac{q}{1 - q}) + \frac{zq}{1 - q}(\frac{q}{1 - q})^{k - 1} - \frac{zq}{1 - q}(\frac{q}{1 - q})^k - (\frac{q}{1 - q})^k}{(1 - (\frac{q}{1 - q}))(1 - (\frac{q}{1 - q})^{k - 1}q\left(1 + \frac{zq}{1 - q}\right))} \\
        &= \frac{q(1 - q)^k + zq \cdot q^{k - 1}(1 - q) - zq \cdot q^k - q^k(1 - q)}{(1 - 2q)((1 - q)^k - q^{k - 1}\cdot q\left(1 - q + zq\right))} \\
        % = \frac{q(1 - q)^k + zq^k - zq^{k + 1} - zq^{k + 1} - q^k + q^{k + 1}}{(1 - 2q)((1 - q)^k - q^k\left(1 - q + zq\right))} \\
        &= \frac{q(1 - q)^k - q^k + q^{k + 1} + zq^k(1 - 2q)}{(1 - 2q)((1 - q)^k - q^k + q^{k + 1} - zq^{k + 1})}
    \end{flalign*}
    regrouping the numerator gives the first equality in the proposition. Now we expand as a geometric series to get the second equality.
    \begin{flalign*}
        &\frac{q(1 - q)^k - q^k + q^{k + 1} + zq^k(1 - 2q)}{(1 - 2q)((1 - q)^k - q^k + q^{k + 1} - zq^{k + 1})} \\
        &= \frac{q(1 - q)^k - q^k + q^{k + 1} + zq^k(1 - 2q)}{1 - \frac{zq^{k + 1}}{(1 - q)^k - q^k + q^{k + 1}}} \cdot \frac{1}{(1 - 2q)((1 - q)^k - q^k + q^{k + 1})} \\
        &= \frac{1}{1 - 2q}\sum_{j = 0}^\infty \frac{(q(1 - q)^k - q^k + q^{k + 1} + zq^k(1 - 2q))q^{j(k + 1)}}{((1 - q)^k - q^k + q^{k + 1})^{j + 1}}z^j \\
        &= \frac{1}{1 - 2q}\sum_{j = 0}^\infty \left(\frac{(q(1 - q)^k - q^k + q^{k + 1})q^{j(k + 1)}}{((1 - q)^k - q^k + q^{k + 1})^{j + 1}}z^j + \frac{(1 - 2q)q^{(j + 1)(k + 1) - 1}}{((1 - q)^k - q^k + q^{k + 1})^{j + 1}}z^{j + 1}\right) \\
        &= \frac{q(1 - q)^k - q^k + q^{k + 1}}{(1 - 2q)((1 - q)^k - q^k + q^{k + 1})} + \\
        &\frac{1}{1 - 2q}\sum_{j = 1}^\infty \frac{(q(1 - q)^k - q^k + q^{k + 1})q^{j(k + 1)} + (1 - 2q)q^{j(k + 1) - 1}((1 - q)^k - q^k + q^{k + 1})}{((1 - q)^k - q^k + q^{k + 1})^{j + 1}}z^j.
    \end{flalign*}
    Expanding the numerator and factoring gives the result in the proposition.
\end{proof}

Using ``singularity analysis'' \cite{Flajolet-Sedgewick} we show that $FD_{j, k}(n)$ grows asymptotically faster than $FO_{j, k}(n)$ which implies Conjecture \ref{Franklin Conjecture}. If $k = 2$, the two series are equivalent so we only need to concern ourselves with $k \geq 3$.

\begin{proposition}\label{unique roots}
    For $k \geq 2$, the polynomials $f_k(q) := 1 - 2q + q^{k + 1}$ and $g_k(q) := (1 - q)^{k - 1} - q^k$ each have a unique simple root of minimal modulus that lies on the real interval $(1/2, 1)$.
\end{proposition}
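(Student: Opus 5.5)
For $f_k$ I would factor out the trivial root $q=1$ and treat the remaining factor with a triangle-inequality argument. For $g_k$ I would use the same kind of argument directly on the equation $(1-q)^{k-1}=q^k$. In both cases the plan is to first find a real root in $(1/2,1)$ by the intermediate value theorem and strict monotonicity. Then I show that any complex root of modulus at most that real root must equal it, and finally check simplicity through the derivative.

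For $f_k$, I use the factorization
\[f_k(q) = (1-q) - q(1-q^k) = (1-q)\bigl(1 - q - q^2 - \cdots - q^k\bigr) =: (1-q)h_k(q).\]
On $[0,\infty)$, $h_k$ is strictly decreasing, with $h_k(1/2)=2^{-k}>0$ and $h_k(1)=1-k<0$ for $k\geq 2$. So $h_k$ has a unique positive root $\rho\in(1/2,1)$, and $h_k'(\rho)<0$, so $\rho$ is a simple root of $h_k$. It is also a simple root of $f_k$, since $\rho\neq 1$.

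Now let $q$ be any root of $h_k$ with $|q|\leq\rho$. Then
\[1=|q+\cdots+q^k|\leq |q|+\cdots+|q|^k.\]
Since $t\mapsto t+\cdots+t^k$ is increasing on $[0,\infty)$ and equals $1$ at $t=\rho$, this forces $|q|=\rho$. It also forces equality in the triangle inequality, so all the powers $q^i$ lie on a common ray. Because their sum is $1$, that ray is the positive real axis, and in particular $q=q^1$ is positive real, so $q=\rho$. The remaining root $q=1$ of $f_k$ has modulus $1>\rho$. Hence $\rho$ is the unique root of $f_k$ of minimal modulus.

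For $g_k$, note that on $[0,1]$ both $(1-q)^{k-1}$ and $-q^k$ are non-increasing, and the second is strictly decreasing, so $g_k$ is strictly decreasing there. We have $g_k(1/2)=2^{-(k-1)}-2^{-k}>0$ and $g_k(1)=-1$, so there is a unique root $r\in(1/2,1)$. It is simple because
\[g_k'(r)=-(k-1)(1-r)^{k-2}-kr^{k-1}<0.\]
Now suppose $g_k(q)=0$ with $|q|\leq r$. Then
\[|q|^k=|1-q|^{k-1}\geq (1-|q|)^{k-1}\geq (1-r)^{k-1}=r^k,\]
where the last equality is $g_k(r)=0$. Here $k-1\geq 1$ is used to keep the inequalities in the right direction. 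So $|q|=r$, and equality throughout gives $|1-q|=1-|q|$, which forces $q$ to be a positive real. Hence $q=r$.

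The only delicate point is the equality case in the triangle inequality for $f_k$: one must argue that the common direction of the $q^i$ is positive real, which follows because their sum equals $1$. Everything else is routine sign checking.
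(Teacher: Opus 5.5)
Your proof is correct and follows the paper's argument. For existence and simplicity you use the intermediate value theorem with monotonicity, then the triangle inequality on $1-q-\cdots-q^k$ after factoring out $1-q$ from $f_k$, and the reverse triangle inequality on $|q|^k=|1-q|^{k-1}$ for $g_k$. You are slightly more careful than the paper, which for $g_k$ only concludes $\beta_k\le|b|$ without the equality case $|1-q|=1-|q|$ that gives uniqueness, and which leaves implicit both the hypothesis $|a|\le\alpha_k$ and the fact that the discarded root $q=1$ of $f_k$ has larger modulus.
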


\begin{proof}
    Fix $k \geq 2$. The Intermediate Value Theorem gives that $f_k$ and $g_k$ both have roots on that interval. Let $\alpha_k$ and $\beta_k$ be the minimal real roots of $f_k$ and $g_k$ respectively. Since $\alpha_k < 1$ we can cancel the factor of $1 - q$ in $f_k(q)$ and notice that $f_k(q)/(1 - q) = 1 - q - \cdots - q^k$ has a negative derivative on $(1/2, 1)$ so $\alpha_k$ is simple. Additionally, $g_k'(q) < 0$ for all $q \in (1/2, 1)$ so $\beta_k$ is also simple. For any other root $a$ of $1 - q - \cdots - q^k$, the triangle inequality implies
    \[1 = |a + \cdots + a^k| \leq |a| + \cdots + |a^k| \leq \alpha_k + \cdots + \alpha_k^k = 1\]
    and so $a$ must be a positive real number and thus equal to $\alpha_k$. For some $b$ of modulus at most 1 that is a root of $g_k(q)$, the reverse triangle inequality implies
    \[|b|^k = |1 - b|^{k - 1} \geq (1 - |b|)^{k - 1}.\]
    So $g_k(|b|) \leq 0$ and so $\beta_k \leq |b|$.
\end{proof}

\begin{proposition}
    Let $\alpha_k$ be the minimal positive root of $f_k(q) := 1 - 2q + q^{k + 1}$ and $\beta_k$ be the minimal positive root of $g_k(q) := (1 - q)^{k - 1} - q^k$ on the interval $(1/2, 1)$. For $j = 0$ and $k \geq 3$ we have the expansions
    \[FD_{0,k}(n) = \frac{(1 - \alpha_k)^2}{\alpha_k^2(2 - (k + 1)\alpha_k^k)}\left(\frac{1}{\alpha_k^n}\right)\left(1 + O\left(\frac{1}{n}\right)\right).\]
    and
    \[FO_{0,k}(n) = \frac{(1 - \beta_k)^k}{\beta_k(2\beta_k - 1)\left((k - 1)(1 - \beta_k)^{k - 2} + k \beta_k^{k - 1}\right)}\left(\frac{1}{\beta_k^n}\right)\left(1 + O\left(\frac{1}{n}\right)\right).\]
    For $j \geq 1$, we have
    \[FD_{j,k}(n) = \frac{(1 - \alpha_k)^2 \alpha_k^{jk - 2}}{(2 - (k + 1) \alpha_k^k)^{j + 1} j!}\left(\frac{n^j}{\alpha_k^n}\right)\left(1 + O\left(\frac{1}{n}\right)\right)\]
    and
    \[FO_{j,k}(n) = \frac{\beta_k^{jk + k - 1}}{(2\beta_k - 1) (1 - \beta_k)^{j-1} \left((k - 1)(1 - \beta_k)^{k - 2} + k \beta_k^{k - 1}\right)^{j + 1} j!}\left(\frac{n^j}{\beta_k^n}\right)\left(1 + O\left(\frac{1}{n}\right)\right).\]
\end{proposition}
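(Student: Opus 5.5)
The plan is to apply Theorem \ref{Asymptotic Formula} to the coefficient of $z^j$ in each of the series expansions of Propositions \ref{FD Function} and \ref{FO Function}. Each such coefficient is a rational function of $q$ that is analytic at $0$. By Proposition \ref{unique roots}, its poles of minimal modulus come only from the factors $f_k(q)=1-2q+q^{k+1}$ and $g_k(q)=(1-q)^{k-1}-q^k$, whose unique smallest roots are $\alpha_k$ and $\beta_k$ in $(1/2,1)$.

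Before applying the theorem, I will check that nothing else interferes.
\begin{itemize}
\item In the $\mathcal{FD}$ coefficients, the only denominator is $f_k^{j+1}$. The numerator $(1-q)^2q^{jk+j-1}$ (or $q-q^k$ when $j=0$) does not vanish at $\alpha_k$, since $\alpha_k\neq 0,1$.
\item In the $\mathcal{FO}$ coefficients, there are extra factors $1-2q$ and $(1-q)^{j-1}$, with roots $1/2$ and $1$. Since $\beta_k>1/2$, the root $1/2$ would be closer to the origin than $\beta_k$. So I must show it is removable, i.e.\ that the numerator vanishes at $q=1/2$ to the same order.
\item For $j\ge1$ this holds because $(1-q)^k-q^k$ vanishes at $q=1/2$. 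I need that root to be simple, and it is, since the derivative there is $-2k(1/2)^{k-1}\neq0$.
\item For $j=0$ it holds because $(1-q)^{k-1}-q^{k-1}$ vanishes at $q=1/2$.
\end{itemize}
After cancelling, the dominant pole is $\beta_k$, and it has multiplicity $j+1$.

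Then I compute $C=\lim_{q\to\alpha}(q-\alpha)^{r}F(q)$ with $r=j+1$. Near the pole,
\[
f_k(q)^{j+1}\sim f_k'(\alpha_k)^{j+1}(q-\alpha_k)^{j+1}, \qquad f_k'(\alpha_k)=-2+(k+1)\alpha_k^k,
\]
and similarly $g_k'(\beta_k)=-(k-1)(1-\beta_k)^{k-2}-k\beta_k^{k-1}$. The signs $(-1)^{j+1}$ from Theorem \ref{Asymptotic Formula} cancel against these negative derivatives. This accounts for the factors $(2-(k+1)\alpha_k^k)^{j+1}$ and $\bigl((k-1)(1-\beta_k)^{k-2}+k\beta_k^{k-1}\bigr)^{j+1}$ in the statement. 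Dividing by $\alpha^{n+j+1}$ and simplifying powers gives the stated constants:
\begin{itemize}
\item $FD$, $j\ge1$: the numerator $(1-\alpha_k)^2\alpha_k^{jk+j-1}$ divided by $\alpha_k^{j+1}$ gives $\alpha_k^{jk-2}$.
\item $FD$, $j=0$: use $\alpha_k-\alpha_k^k=\alpha_k^{-1}(\alpha_k^2-\alpha_k^{k+1})$ together with $\alpha_k^{k+1}=2\alpha_k-1$, which gives $\alpha_k^2-\alpha_k^{k+1}=(1-\alpha_k)^2$.
\item $FO$: use $\beta_k^k=(1-\beta_k)^{k-1}$. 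This gives
\[
(1-\beta_k)^k-\beta_k^k=(1-\beta_k)^{k-1}(1-2\beta_k) \quad\text{and}\quad (1-\beta_k)^{k-1}-\beta_k^{k-1}=\beta_k^{k-1}(1-2\beta_k)/(1-\beta_k).
\]
These partially cancel the $1-2\beta_k$ factor and produce the stated closed forms, including the sign $2\beta_k-1>0$.
\end{itemize}
The $j=0$ cases are simple poles, with $n^0/0!$.

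The main obstacle is the $\mathcal{FO}$ side: justifying removability of $q=1/2$, confirming that no other root of $g_k$ lies on the circle $|q|=\beta_k$, and the algebraic simplification of the constants. For the modulus claim, Proposition \ref{unique roots} only showed $|b|\ge\beta_k$ for roots $b$ of $g_k$. I would upgrade it to strict inequality for $b\neq\beta_k$. Equality forces $|1-b|=1-|b|$, which makes $b$ a positive real, so $b=\beta_k$. The simplification of constants I would carry out using the relation $\beta_k^k=(1-\beta_k)^{k-1}$, checking the result against the $k=3$ case numerically.
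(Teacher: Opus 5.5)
Your overall route is the paper's: take the $z^j$ coefficients from Propositions \ref{FD Function} and \ref{FO Function}, treat $q=1/2$ as a removable singularity, and apply Theorem \ref{Asymptotic Formula} with $C$ obtained from $f_k'(\alpha_k)$ or $g_k'(\beta_k)$. Your $\mathcal{FD}$ constants are correct. Two of your checks are more careful than the paper's proof. The first is the simple-zero verification at $1/2$. The second is the upgrade of Proposition \ref{unique roots} to $|b|>\beta_k$ for every root $b\neq\beta_k$ of $g_k$. The single-dominant-pole hypothesis needs this, and the paper only proves $|b|\ge\beta_k$.

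The step that fails is the $\mathcal{FO}$ simplification, which the paper dismisses as ``nearly identical.'' Both identities you write down are false. From $\beta_k^k=(1-\beta_k)^{k-1}$ one gets
\[
(1-\beta_k)^k-\beta_k^k=(1-\beta_k)\beta_k^k-\beta_k^k=-\beta_k^{k+1},\qquad (1-\beta_k)^{k-1}-\beta_k^{k-1}=\beta_k^k-\beta_k^{k-1}=-\frac{(1-\beta_k)^k}{\beta_k}.
\]
Your first identity would actually require $\beta_k^{k-1}=(1-\beta_k)^{k-1}$, i.e.\ $\beta_k=1/2$. Your versions cancel the factor $1-2\beta_k$ completely, so they lead to constants with no factor $2\beta_k-1$ at all, contrary to your claim of a partial cancellation. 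For $j=0$ you would get $\beta_k^{k-1}/\bigl((1-\beta_k)G\bigr)$, where $G=(k-1)(1-\beta_k)^{k-2}+k\beta_k^{k-1}$. For $k=3$ this is about $0.41$. But there $\sum_n FO_{0,3}(n)q^n=q/g_3(q)$, and the true constant is $1/G\approx 0.545$.

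With the correct identities nothing cancels; the minus sign simply turns $1-2\beta_k$ into $2\beta_k-1$. For $j\ge1$, the numerator $\beta_k^{jk+j-1}\cdot\beta_k^{k+1}$ divided by $\beta_k^{j+1}$ gives exactly $\beta_k^{jk+k-1}$. For $j=0$ you get $(1-\beta_k)^k/\bigl(\beta_k(2\beta_k-1)G\bigr)$, as stated. So the plan works once this algebra is redone, but as written it does not produce the stated $\mathcal{FO}$ constants.
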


\begin{proof}
    Fix $k \geq 3$. We note that $q = 1/2$ is a removable singularity in $\mathcal{FO}_k(z, q)$. The roots $\alpha_k$ and $\beta_k$ are unique by Proposition \ref{unique roots}. We begin with
    \[\frac{q - q^k}{1 - 2q + q^{k + 1}}\]
    which by Theorem \ref{Asymptotic Formula} gives the following asymptotic expansion
    \begin{flalign*}
        FD_{0,k}(n) &= \lim_{q \to \alpha_k} \frac{(q - \alpha_k)(q - q^k)}{1 - 2q + q^{k + 1}} \left(\frac{-1}{\alpha_k^{n+1}}\right)\left(1 + O\left(\frac{1}{n}\right)\right) \\
        &= \lim_{q \to \alpha_k} \frac{-(q - q^k) - (q - \alpha_k)(1 - kq^{k - 1})}{(k + 1) q^k - 2} \left(\frac{1}{\alpha_k^{n+1}}\right)\left(1 + O\left(\frac{1}{n}\right)\right) \\
        &= \frac{\alpha_k - \alpha_k^k}{2-(k + 1)\alpha_k^k}\left(\frac{1}{\alpha_k^{n+1}}\right)\left(1 + O\left(\frac{1}{n}\right)\right) \\
        &= \frac{(1 - \alpha_k)^2}{\alpha_k^2(2 - (k + 1)\alpha_k^k)}\left(\frac{1}{\alpha_k^n}\right)\left(1 + O\left(\frac{1}{n}\right)\right)
    \end{flalign*}
    Additionally, for $j \geq 1$ we have
    \begin{flalign*}
        FD_{j,k}(n) &= \lim_{q \to \alpha_k} \frac{(q - \alpha_k)^{j + 1}(1 - q)^2 q^{jk + j - 1}}{(1 - 2q + q^{k + 1})^{j + 1}} \left(\frac{(-1)^{j + 1} n^j}{\alpha_k^{n + j + 1} j!}\right)\left(1 + O\left(\frac{1}{n}\right)\right) \\
        &= \lim_{q \to \alpha_k} \frac{(j + 1)!(1 - q)^2 q^{jk + j - 1} + (q - \alpha_k)(\ldots)}{(j + 1)!((k+1)q^k - 2)^{j + 1}} \left(\frac{(-1)^{j + 1} n^j}{\alpha_k^{n + j + 1} j!}\right)\left(1 + O\left(\frac{1}{n}\right)\right) \\
        &= \frac{(1 - \alpha_k)^2 \alpha_k^{jk + j - 1}}{((k + 1) \alpha_k^k - 2)^{j + 1}} \left(\frac{(-1)^{j + 1} n^j}{\alpha_k^{n + j + 1} j!}\right)\left(1 + O\left(\frac{1}{n}\right)\right) \\
        &= \frac{(1 - \alpha_k)^2 \alpha_k^{jk - 2}}{(2 - (k + 1) \alpha_k^k)^{j + 1} j!}\left(\frac{n^j}{\alpha_k^n}\right)\left(1 + O\left(\frac{1}{n}\right)\right).
    \end{flalign*}
    The derivation of the other two formulas is nearly identical.
\end{proof}

We now show that, asymptotically, $FD_{j, k}(n)$ dominates by showing that $\alpha_k < \beta_k$ for all $k \geq 3$.

\begin{proposition}\label{alpha beta inequality}
    If $f_k(q) := 1 - 2q + q^{k + 1}$ and $g_k(q) := (1 - q)^{k - 1} - q^k$ and $\alpha_k$ is the smallest positive real root of $f_k(q)$ and $\beta_k$ the smallest positive real root of $g_k(q)$ on $(1/2, 1)$, then for all $k \geq 3$, we have $\alpha_k < \beta_k$.
\end{proposition}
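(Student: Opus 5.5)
The plan is to reduce the comparison of the two roots to a sign check of $g_k$ at the single point $\alpha_k$, and then to verify that sign using an explicit upper bound on $\alpha_k$ together with Lemma \ref{Bernoulli}.

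\textbf{Step 1 (reduction to a sign of $g_k$).} On $(0,1)$ we have $g_k'(q) = -(k-1)(1-q)^{k-2} - kq^{k-1} < 0$, so $g_k$ is strictly decreasing there. Moreover $g_k(1/2) = 2^{-(k-1)} - 2^{-k} > 0$. Hence $\beta_k$ is the unique zero of $g_k$ in $(1/2,1)$, and $g_k > 0$ on $(1/2,\beta_k)$. Since $\alpha_k \in (1/2,1)$ by Proposition \ref{unique roots}, it suffices to prove
\[ g_k(\alpha_k) > 0, \quad\text{that is,}\quad (1-\alpha_k)^{k-1} > \alpha_k^k. \]

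\textbf{Step 2 (localizing $\alpha_k$ just above $1/2$).} Put $q_0 := \tfrac12 + 2^{-k-1}$. First, $f_k(1/2) = 2^{-k-1} > 0$. Next,
\[ f_k(q_0) = -2^{-k} + q_0^{k+1}, \qquad q_0^{k+1} = 2^{-k-1}(1+2^{-k})^{k+1} \le 2^{-k-1}e^{(k+1)2^{-k}}. \]
For $k \ge 3$ we have $e^{(k+1)2^{-k}} \le e^{1/2} < 2$, so $f_k(q_0) < 0$. By the Intermediate Value Theorem $f_k$ has a root in $(1/2,q_0)$. The minimal positive root also exceeds $1/2$: this follows from the proof of Proposition \ref{unique roots}, or directly because $f_k > 0$ on $(0,1/2]$. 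Therefore
\[ \tfrac12 < \alpha_k < q_0. \]

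\textbf{Step 3 (the inequality).}
\begin{itemize}
\item Right-hand side: $\alpha_k^k < q_0^k = 2^{-k}(1+2^{-k})^k \le 2^{-k}e^{k2^{-k}}$.
\item Left-hand side: $1-\alpha_k > \tfrac12(1-2^{-k})$. Lemma \ref{Bernoulli} then gives
\[ (1-\alpha_k)^{k-1} > 2^{-(k-1)}(1-2^{-k})^{k-1} \ge 2^{-(k-1)}\bigl(1-(k-1)2^{-k}\bigr). \]
\end{itemize}
Comparing the two bounds, it suffices to show
\[ 2\bigl(1-(k-1)2^{-k}\bigr) > e^{k2^{-k}} \quad \text{for all } k \ge 3. \]
At $k=3$ this reads $1.5 > e^{3/8} \approx 1.455$. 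For $k \ge 3$ both $(k-1)2^{-k}$ and $k2^{-k}$ are nonincreasing in $k$. So the left side is nondecreasing and the right side is nonincreasing, and the inequality persists for all $k \ge 3$.

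\textbf{Main obstacle.} The delicate point is Step 3. The crude bound $\alpha_k^k = (2\alpha_k-1)/\alpha_k < 4(\alpha_k - \tfrac12)$ matches the main term $2^{-k+1}$ exactly and gives nothing. One must keep the factor $2^{-k}$ from $q_0^k$ and accept a modest margin, so the real work is checking the case $k=3$ numerically, which is tight (about $1.5$ versus $1.455$). If that margin failed, I would sharpen $q_0$ by using $\alpha_k = \tfrac12 + \alpha_k^{k+1}/2$ iteratively, or treat $k=3$ directly by computing $\alpha_3 \approx 0.5437$ and $\beta_3 \approx 0.5698$ from the explicit polynomials.
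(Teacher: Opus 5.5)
Your proof is correct, and after the common first step (reducing to $g_k(\alpha_k)>0$ because $g_k$ is decreasing on $(0,1)$) it takes a genuinely different route from the paper's.

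The paper uses only a crude uniform bound, $\alpha_k\le\alpha_3<0.55$. This comes from $f_k<f_{k-1}$ on $(0,1)$ together with the Cardano-type closed form of $\alpha_3$. Its key step is algebraic: $f_k(\alpha_k)=0$ gives $1-\alpha_k=\alpha_k(1-\alpha_k^k)$, hence $(1-\alpha_k)^{k-1}=\alpha_k^{k-1}(1-\alpha_k^k)^{k-1}$. Bernoulli is then applied to the factor $(1-\alpha_k^k)^{k-1}$. The needed input $(k-1)\alpha_k^k<1-\alpha_k$ is extracted, again via the defining relation, from the numerical fact $k(0.55)^{k-1}<1$.

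You instead locate $\alpha_k$ in the shrinking window $(\tfrac12,\tfrac12+2^{-k-1})$ by a direct sign check of $f_k$. You then bound the two sides of $(1-\alpha_k)^{k-1}>\alpha_k^k$ separately and reduce everything to the single elementary inequality $2\bigl(1-(k-1)2^{-k}\bigr)>e^{k2^{-k}}$. This is checked at $k=3$ and propagated by monotonicity in $k$.

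What your version buys:
\begin{itemize}
\item It needs no closed form for $\alpha_3$ and never uses the defining equation beyond locating the root.
\item It gives sharper information, $\alpha_k-\tfrac12=O(2^{-k})$. Since $\beta_k-\tfrac12$ is of order $1/k$ (roughly $\ln 2/(4k)$), this shows why the inequality is very far from tight for large $k$.
\end{itemize}
The cost is a thin numerical margin at $k=3$: $1.5$ versus $e^{3/8}\approx 1.455$. The paper's argument instead trades precise localization for one uniform numeric bound and a neat use of the root's defining identity.
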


\begin{proof}
    % Fix $k \geq 3$ and notice that $g_k(0) = 1$ and $g_k(1) = -1$ so by the intermediate value theorem, $\beta_k \in (0, 1)$. Also notice that $g_k(q)$ is decreasing on this interval since $g_k'(q) < 0$ for $q \in (0, 1)$ and thus so it is sufficient to show that $g_k(\alpha_k) > 0$.

    Fix $k \geq 3$. By taking the derivative, we see that $g_k(q)$ is decreasing on the interval $(0, 1)$ and by the Intermediate Value Theorem $\beta_k$ lies on this same interval. It is thus sufficient to show that $g_k(\alpha_k) > 0$. Notice that for $q \in (0, 1)$ we have
    \[f_k(q) < f_{k-1}(q) < \cdots < f_3(q)\]
    which implies
    \[\alpha_k < \alpha_{k-1} < \cdots < \alpha_3 = \frac{\sqrt[3]{586+102\sqrt{33}}-\sqrt[3]{17+3\sqrt{33}}-2}{3\sqrt[3]{17+3\sqrt{33}}} < 0.55.\]
    Noticing that $x(0.55)^{x - 1} < 1$ for $x$ at least 2.60, we obtain the following string of inequalities
    \[1 > k(0.55)^{k - 1} > k\alpha_3^{k - 1} \geq k\alpha_k^{k-1} > \alpha_k^k + (k - 1) \alpha_k^{k-1}.\]
    Comparing the extreme sides we obtain
    \[1 - \alpha_k^k > (k-1) \alpha_k^{k - 1}\]
    and now multiplying by $\alpha_k$ and using the defining relation $1 - 2\alpha_k + \alpha_k^{k + 1} = 0$, we obtain
    \[1 - \alpha_k > (k - 1)\alpha_k^k.\]
    Rearranging and applying Lemma \ref{Bernoulli}, we obtain
    \[(1 - \alpha_k^k)^{k - 1} \geq 1 - (k - 1) \alpha_k^k > \alpha_k.\]
    So
    \[(1 - \alpha_k^k)^{k - 1} - \alpha_k > 0\]
    and multiplying by $\alpha_k^{k - 1}$ gives
    \[(\alpha_k - \alpha_k^{k + 1})^{k - 1} - \alpha_k^k > 0.\]
    Once again using $1 - 2\alpha_k + \alpha_k^{k + 1} = 0$ gives
    \[g_k(\alpha_k) = (1 - \alpha_k)^{k - 1} - \alpha_k^k > 0.\]
\end{proof}

Clearly Proposition \ref{alpha beta inequality} implies the truth of Conjecture \ref{Franklin Conjecture}. We also use the quadrivariate generating functions to obtain the following Beck-type theorem.

% \begin{theorem}
%     Let $\Delta_{j,k} T(n)$ be the total number of parts in all partitions counted by $FD_{j,k}(n)$ minus the total number of parts in all partitions counted by $FO_{j,k}(n)$ and similarly let $\Delta_{j,k} L(n)$ be the sum of the largest parts in all partitions counted by $FD_{j,k}(n)$ minus the sum of the largest parts in all partitions counted by $FO_{j,k}(n)$. Then we have for all $j, k$ and $n$ that
%     \[\Delta_{j,k} T(n) + \Delta_{j,k} L(n) = (n + 1) (FD_{j,k}(n) + FO_{j,k}(n)).\]
% \end{theorem}

% \begin{proof}
%     \textcolor{red}{Basically just comes down to length + largest part = perimeter + 1}
% \end{proof}

\section{Proof of Conjecture \ref{Parity Conjecture}}

    In the proof of Theorem 2.1 in \cite{Straub} Straub used induction to show that a partition into distinct parts can be built by adding 1 onto the largest part or by appending a new part to the partition that is exactly one larger than the current largest part. We use this idea to find a trivariate generating function for partitions into distinct parts while keeping track of the number of odd parts, the number of even parts, and the perimeter simultaneously.

\begin{theorem}
    If $o(\lambda)$ is the number of odd parts in $\lambda$ and $e(\lambda)$ the number of even parts in $\lambda$, 
    \[F(x, y, q) := \sum_{\lambda \in \mathcal D}x^{o(\lambda)}y^{e(\lambda)} q^{per(\lambda)} = \frac{xq + yq^2 + xyq^3}{1 - q^2(1 + xq)(1 + yq)}.\]
\end{theorem}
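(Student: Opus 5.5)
We would prove the formula with the recursive construction attributed to Straub: every nonempty partition into distinct parts is produced from a smaller one in a unique way, and we track how each step changes $o$, $e$ and the perimeter. Here $\mathcal D$ means the nonempty partitions into distinct parts; the right-hand side has constant term $0$, so the empty partition is excluded. For $\lambda\in\mathcal D$ with largest part $m=\lambda_1$ and length $\ell$, we have $per(\lambda)=m+\ell-1$.

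\textbf{Step 1: base cases.} The partitions with perimeter at most $3$ are $(1)$, $(2)$ and $(2,1)$. Their weights are $xq$, $yq^2$ and $xyq^3$, which is exactly the numerator. The partition $(3)$ also has perimeter $3$ and weight $xq^3$; it should come out of the recursion rather than the numerator.

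\textbf{Step 2: a two-step decomposition.} Raising the largest part by $1$ multiplies the weight by $q$ but flips the parity of $m$, so $x$ and $y$ do not transform uniformly. To avoid this, we group moves two at a time. For $\mu\in\mathcal D$ with largest part $m$, set
\[
A(\mu)=(m+2,\mu_2,\dots),\quad B(\mu)=(m+2,m+1,\mu_2,\dots),\quad C(\mu)=(m+2,m,\mu_2,\dots),\quad E(\mu)=(m+2,m+1,m,\mu_2,\dots).
\]
\begin{itemize}
\item $A(\mu)$ keeps both counts and the length, and raises the perimeter by $2$, so its weight is $q^2\cdot w(\mu)$.
\item $B(\mu)$ adds one part $m+1$, whose parity is opposite to that of $m$, and raises the perimeter by $3$.
\item $C(\mu)$ adds one part $m$, of the same parity as $m$, and raises the perimeter by $3$.
\item $E(\mu)$ adds both $m$ and $m+1$, contributing one odd and one even part, and raises the perimeter by $4$.
\end{itemize}
Together $B$ and $C$ add exactly one odd part in one case and one even part in the other. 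Summing over the four moves therefore multiplies $F$ by $q^2+xq^3+yq^3+xyq^4=q^2(1+xq)(1+yq)$. This gives
\[
F=xq+yq^2+xyq^3+q^2(1+xq)(1+yq)F,
\]
and solving for $F$ yields the claimed formula.

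\textbf{Step 3: bijectivity (the main obstacle).} We must show every $\lambda\in\mathcal D$ other than the three base partitions arises from exactly one pair $(\mu,\text{move})$. The inverse map reads off the gaps at the top of $\lambda$; write $\lambda=(\lambda_1,\lambda_2,\lambda_3,\dots)$.
\begin{itemize}
\item If $\lambda_1-\lambda_2\ge 3$, or $\lambda$ has one part $\lambda_1\ge3$, then $\lambda=A(\mu)$ with $\mu$ obtained by lowering $\lambda_1$ by $2$.
\item If $\lambda_1-\lambda_2=2$, then $\lambda=C(\mu)$ with $\mu$ obtained by deleting $\lambda_1$. One must check that $\lambda$ is not an $A$-image, since lowering $\lambda_1$ by $2$ would collide with $\lambda_2$.
\item If $\lambda_1-\lambda_2=1$ and $\lambda_2-\lambda_3\ge2$ (or $\lambda_3$ is absent), then $\lambda=B(\mu)$ with $\mu=(\lambda_2-1,\lambda_3,\dots)$. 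This requires $\lambda_2-1>\lambda_3$ and $\lambda_2-1\ge1$.
\item If $\lambda_1-\lambda_2=1$ and $\lambda_2-\lambda_3=1$, then $\lambda=E(\mu)$ with $\mu=(\lambda_3,\lambda_4,\dots)$.
\end{itemize}
These conditions are mutually exclusive, so the preimage is unique. The edge cases are where the partitions with perimeter at most $3$ fail to have a nonempty preimage; we verify by hand that these are exactly $(1)$, $(2)$ and $(2,1)$. For instance, $(2,1)$ would need $\mu=(0)$, which is not allowed.

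If the case analysis becomes too tangled, a fallback is to derive the formula from a standard decomposition. We would use the Straub one-step recursion with a separate generating function for each parity of the largest part. This gives a $2\times2$ linear system, and eliminating the variables should reproduce the same rational function.
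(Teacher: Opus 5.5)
Your proof is correct, and it rests on the same Straub decomposition the paper uses, organized differently. The paper uses the one-step moves: raise the largest part by $1$, or adjoin a new part one larger than the largest. Each of these flips the parity of the largest part. So the paper splits $F=O+E$ by that parity and solves the coupled system $O=xq+(xq/y+xq^2)E$, $E=(yq/x+yq^2)O$.

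Your moves $A,B,C,E$ are exactly the four two-step compositions of those moves: raise--raise, raise-then-adjoin, adjoin-then-raise, adjoin--adjoin. The two parity flips cancel, so you get a single functional equation for $F$. Every multiplier is a genuine monomial, with no division by $x$ or $y$. Substituting the paper's second equation into its first gives $O=xq+q^2(1+xq)(1+yq)O$, which is your recursion restricted to odd largest parts.

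The trade-off is in the bijectivity check. The paper only needs the one-step inverse (top gap $1$ versus top gap $\ge 2$), with the single root $(1)$. You need a four-way analysis on the top two gaps, with three roots $(1),(2),(2,1)$. You carry this out correctly, including the disjointness of the four image classes and the condition $\lambda_2\ge 2$ in case $B$.

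One small slip: the first sentence of Step 1 says $(1),(2),(2,1)$ are the partitions of perimeter at most $3$. The accurate statement, which is the one you actually use, is that these are the distinct-part partitions with no preimage. The partition $(3)=A((1))$ also has perimeter $3$.
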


\begin{proof}
    Let $O(x, y, q)$ be the generating functions where $x$ keeps track of the number of odd parts, $y$ keeps track of the number of even parts, $q$ keeps track of the perimeter of the partition, and where the largest part is odd. Similarly, $E(x, y, q)$ is the same except the largest part is even. Clearly $F(x, y, q) = O(x, y, q) + E(x, y, q)$. We are going to build a recursion with the two operations of adding one to the largest part or appending a new part one larger than the current largest part. Both of these change the parity of the largest part. Adding one to the largest part increases the perimeter by one and adding on the new largest part increases the perimeter by two. Thus we get the two relations
    \begin{flalign*}
        O(x, y, q) = xq + \left( \frac{xq}{y} + xq^2 \right) E(x, y, q) \\
        E(x, y, q) = \left( \frac{yq}{x} + yq^2 \right) O(x, y, q)
    \end{flalign*}
    where the term of $xq$ is used to keep track of the partition $(1)$. It is routine to then uncouple these to obtain
    \begin{flalign*}
        O(x, y, q) = \frac{xq}{1 - q^2(1 + xq)(1 + yq)} \\
        E(x, y, q) = \frac{yq^2 + xyq^3}{1 - q^2(1 + xq)(1 + yq)}.
    \end{flalign*}
    Adding these together proves the theorem.
\end{proof}

\begin{theorem}
    For all $m$,
    \begin{flalign*}
        rd_o(2m) - rd_e(2m) &= \sum_{n = 0}^\infty \binom{n}{m - n - 1}\left(\binom{n}{m -n} - \binom{n}{m - n - 1}\right) \\
        rd_o(2m + 1) - rd_e(2m + 1) &= \sum_{n = 0}^\infty \binom{n}{m - n}\left(\binom{n}{m - n}-\binom{n}{m - n - 1}\right).
    \end{flalign*}
\end{theorem}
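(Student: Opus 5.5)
We extract the difference from the trivariate generating function $F(x,y,q)$ of the preceding theorem. The difference $rd_o(n)-rd_e(n)$ counts partitions $\lambda$ into distinct parts with perimeter $n$, weighted by $\operatorname{sgn}(o(\lambda)-e(\lambda))$. So we want the coefficient of $q^n$ in the sum over monomials $x^a y^b$ with $a>b$, minus the sum over monomials with $a<b$.

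The first step is to expand $F$. Write
\[\frac{1}{1-q^2(1+xq)(1+yq)}=\sum_{n\ge 0} q^{2n}(1+xq)^n(1+yq)^n=\sum_{n\ge0}\sum_{i,j}\binom{n}{i}\binom{n}{j}x^iy^j q^{2n+i+j}.\]
The numerator $xq+yq^2+xyq^3$ then shifts exponents. The cleanest route is to use the decomposition $F=O+E$, with $O=xq\cdot G$ and $E=yq^2(1+xq)\cdot G$, where $G$ is the expansion above. This gives three families of monomials: $x^{i+1}y^j q^{2n+i+j+1}$, $x^iy^{j+1}q^{2n+i+j+2}$, and $x^{i+1}y^{j+1}q^{2n+i+j+3}$.

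The second step is to pass to signs. Replace each monomial $x^ay^b$ by $\operatorname{sgn}(a-b)$ and collect by the total $q$-exponent $N$. With $s=i+j$, the exponent is $2n+s+1$, $2n+s+2$, or $2n+s+3$, so the parity of $N$ determines the parity of $s$ in each family. For fixed $n$ and $s$, the weighted sum
\[\sum_{i+j=s}\binom{n}{i}\binom{n}{j}\operatorname{sgn}(i-j+c),\qquad c\in\{1,-1,0\},\]
telescopes by the symmetry $i\leftrightarrow j$. The pairs $(i,j)$ and $(j,i)$ cancel, except on the diagonal and on the near-diagonal where the shift $c$ breaks the tie.

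For $c=1$ and $s$ odd, the surviving terms are those with $i=j-1$, each contributing $+1$. There are two such ordered pairs around the middle, but the symmetric pair cancels, so the net contribution should come out to a single product of the form $\binom{n}{(s-1)/2}\binom{n}{(s+1)/2}$. For $s$ even, the diagonal $i=j$ contributes $\binom{n}{s/2}^2$. The families $c=-1$ and $c=0$ are handled similarly, and the $c=0$ family vanishes on the diagonal.

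The third step is to substitute $N=2m$ or $N=2m+1$ and solve for $s$ in terms of $m$ and $n$. This should produce exactly the combinations $\binom{n}{m-n}^2-\binom{n}{m-n}\binom{n}{m-n-1}$ and $\binom{n}{m-n-1}\binom{n}{m-n}-\binom{n}{m-n-1}^2$, possibly after reindexing $n\to n-1$ in the $E$-terms so that all three families share the same $n$.

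The main obstacle is the bookkeeping in steps two and three. We need the near-diagonal contributions from the three families to combine, after the index shifts, into the stated products with the correct signs. To keep this under control, we first compute the $x\leftrightarrow y$ antisymmetric part of $F$, namely
\[F(x,y,q)-F(y,x,q)=\frac{(x-y)(q-q^2)}{1-q^2(1+xq)(1+yq)},\]
since the denominator is symmetric and the $xyq^3$ term cancels. This leaves a single family $(x-y)G$ times $(q-q^2)$, which makes the sign evaluation a clean telescoping. We then split according to the parity of the exponent to obtain the two cases $N=2m$ and $N=2m+1$. As a final check, we verify small values of $m$ against the conjecture's data, including $rd_o(2)<rd_e(2)$.
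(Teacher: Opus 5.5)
Your proposal is correct and is essentially the paper's proof. Your antisymmetrization $F(x,y,q)-F(y,x,q)=(x-y)(q-q^2)G$ is the two-variable form of the paper's identity $F_k-F_{-k}=(q-q^2)(G_{k-1}-G_{k+1})$, which the paper gets from $G_k=G_{-k}$. Summing over $a>b$ then telescopes to the diagonal and near-diagonal pieces $G_0+G_1$, and their coefficients give the stated sums with no reindexing of $n$. One minor slip in your family-by-family version: for $c=1$ the surviving near-diagonal term is $i=j+1$, and its mirror $i=j-1$ has sign $0$ rather than cancelling. Your resulting product $\binom{n}{(s-1)/2}\binom{n}{(s+1)/2}$ is nevertheless right.
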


\begin{proof}
    We can specialize $x$ and $y$ to obtain the following

\[F(z, z^{-1}, q) = \sum_{\lambda \in \mathcal D}z^{o(\lambda) - e(\lambda)} q^{per(\lambda)} = \frac{zq + z^{-1}q^2 + q^3}{1 - q^2(1 + zq)(1 + z^{-1}q)}.\]

Written as a series in $z$,

\[F(z, z^{-1}, q) = \sum_{k \in \mathbb Z} F_k(q) z^k\]

where $F_k(q)$ is keeping track of partitions with exactly $k$ more odd parts than even parts. We are thus interested in the expression

\begin{flalign*}
    \sum_{n = 0}^\infty \left(rd_o(n) - rd_e(n)\right)q^n = \sum_{k = 1}^\infty \left(F_k(q) - F_{-k}(q)\right).
\end{flalign*}

We will also define the ancillary $G_k(q)$ by

\[\frac{1}{1 - q^2(1 + zq)(1 + z^{-1}q)} = \sum_{k \in \mathbb Z} G_k(q) z^k\]

so that

\[F_k = qG_{k-1}(q) + q^2G_{k + 1}(q) + q^3G_k(q).\]

We can also see that $G_k(q) = G_{-k}(q)$ so that

\begin{flalign*}
    F_k(q) - F_{-k}(q) =& \left(qG_{k-1}(q) + q^2G_{k + 1}(q) + q^3G_k(q)\right) \\ 
                        &- \left(qG_{-k-1}(q) + q^2G_{-k + 1}(q) + q^3G_{-k}(q)\right) \\
                        =& \left(qG_{k-1}(q) + q^2G_{k + 1}(q) + q^3G_k(q)\right) \\ 
                        &- \left(qG_{k+1}(q) + q^2G_{k - 1}(q) + q^3G_k(q)\right) \\
                        =& (q-q^2)(G_{k - 1} - G_{k + 1}).
\end{flalign*}

This allows our series to telescope.

\begin{flalign*}
    \sum_{n = 0}^\infty \left(rd_o(n) - rd_e(n)\right)q^n &=  \sum_{k = 1}^\infty \left(F_k(q) - F_{-k}(q)\right) \\
    &= \sum_{k = 1}^\infty (q - q^2)(G_{k - 1} - G_{k + 1}) \\
    &= (q - q^2)(G_0 + G_1).
\end{flalign*}

Now we are left with determining $G_0$ and $G_1$. By geometric series,

\[\frac{1}{1 - q^2(1 + zq)(1 + z^{-1}q)} = \sum_{n = 0}^\infty q^{2n}(1 + zq)^n (1 + z^{-1}q)^n.\]

Which, by the binomial theorem, is

\[\sum_{n = 0}^\infty \sum_{i = 0}^n \sum_{j = 0}^n \binom ni \binom nj q^{2n + i + j}z^{i - j}.\]

We only need the constant and linear terms, so just take the terms corresponding to $i = j$ and $i = j + 1$ to find

\begin{flalign*}
    G_0(q) &= \sum_{n = 0}^\infty \sum_{j = 0}^n \binom{n}{j}^2 q^{2n + 2j}, \\
    G_1(q) &= \sum_{n = 0}^\infty \sum_{j = 0}^n \binom{n}{j + 1} \binom{n}{j} q^{2n + 2j + 1}.
\end{flalign*}

From which we can extract the following formula for the coefficients.

\begin{flalign*}
    [q^{2m}]G_0(q) &= \sum_{n = 0}^\infty \binom{n}{m - n}^2, \\
    [q^{2m - 1}]G_1(q) &= \sum_{n = 0}^\infty \binom{n}{m - n} \binom{n}{m - n - 1}.
\end{flalign*}

Notice that $G_0(q)$ has only even powers of $q$ and $G_1(q)$ only odd powers. The theorem follows from the fact $rd_o(n) - rd_e(n) = (q - q^2)(G_0 + G_1)$.
\end{proof}

We give recurrences for these differences whose proofs we only sketch, but can be verified computationally. Using the recurrences and mathematical induction, one can then prove Conjecture \ref{Parity Conjecture}.

\begin{lemma}
    For every $m \geq 0$, if we define $a_m := rd_o(2m + 1) - rd_e(2m + 1)$ then
    \begin{flalign*}
        (m+5)(4m^2+28m+29)a_{m+4} =& 4(2m^3+21m^2+58m+43)a_{m+1} \\
        &+(4m^3+44m^2+127m+117)a_{m+2} \\
        &+2(4m^3+46m^2+142m+107)a_{m+3} \\
        &-(m+1)(4m^2+36m+61)a_m.
    \end{flalign*}
    For every $m \geq 0$, if we define $b_m := rd_o(2m) - rd_e(2m)$ then
    \begin{flalign*}
        (m+5)(m^2+2m-2)b_{m+4} =& m(m+4)(2m+3)b_{m+1} \\
        &+(m+1)(m^2+5m+8)b_{m+2} \\
        &+(m+1)(m+4)(2m+3)b_{m+3} \\
        &-(m+1)(m^2+4m+1)b_m.
    \end{flalign*}
\end{lemma}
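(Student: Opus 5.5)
Both recurrences will be derived by creative telescoping (Zeilberger's algorithm) applied to the binomial sums of the preceding theorem. Recall
\[a_m = \sum_{n} \binom{n}{m-n}\left(\binom{n}{m-n} - \binom{n}{m-n-1}\right), \qquad b_m = \sum_n \binom{n}{m-n-1}\left(\binom{n}{m-n}-\binom{n}{m-n-1}\right),\]
and both are finite sums, since only $m/2 \le n \le m$ contribute. Each summand is a linear combination of two proper hypergeometric terms in $(m,n)$. Equivalently, each is a single hypergeometric term times a rational function, because
\[\binom{n}{m-n-1} = \binom{n}{m-n}\frac{m-n}{2n-m+1}.\]
So write $a_m = \sum_n t(m,n)$ with
\[t(m,n) = \binom{n}{m-n}^2 \frac{3n-2m+1}{2n-m+1},\]
and similarly for $b_m$. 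Zeilberger's theorem then guarantees a recurrence in $m$ with polynomial coefficients.

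An alternative route is to work at the generating function level. The series $\sum_n \binom{n}{m-n}^2 x^m$ and its companion sums are diagonals, i.e.\ constant terms in $z$ of the rational function $1/(1-q^2(1+zq)(1+z^{-1}q))$. Hence $G_0$ and $G_1$ are D-finite. One can find their differential equations by the Almkvist--Zeilberger method, or by computing the residue explicitly: $G_0$ and $G_1$ are algebraic, since they are constant terms of a Laurent polynomial rational function. From $\sum_n (rd_o(n) - rd_e(n)) q^n = (q-q^2)(G_0+G_1)$ one then extracts the even and odd parts. Computing the residue explicitly seems cleanest. Clearing $z$ gives the quadratic
\[q^3 z^2 - (1 - q^2 - q^4) z + q^3\]
in the denominator. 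The constant term is therefore $1/\sqrt{(1-q^2-q^4)^2 - 4q^6}$, and $G_1$ can be written in terms of the small root. So $G_0$ and $G_1$ are algebraic with an explicit square root, and their even and odd parts in $q$ satisfy first-order linear ODEs over $\mathbb{Q}(q)$ after substituting $x = q^2$. Converting these ODEs to coefficient recurrences yields a recurrence for $a_m$ and one for $b_m$.

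The main obstacle is matching the specific order-4 recurrences stated in the lemma. The minimal recurrence produced by either method may have a different order, or differ by a left multiple. I would therefore verify the statement as follows. Apply the stated operator to the summand $t(m,n)$, and exhibit a certificate $R(m,n)$, a rational function, such that
\[L_m\, t(m,n) = \Delta_n\big(R(m,n)\,t(m,n)\big).\]
Divide through by $t(m,n)$ to reduce this to a polynomial identity, then sum over $n$, using that boundary terms vanish because the sum is finite. Finding $R$ is the computational heart of the argument and is routine with a computer algebra system, which is why the paper only sketches it. As a sanity check, I would confirm the recurrences numerically for the first several $m$ against the closed forms.

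For the downstream positivity claim, the recurrences do not immediately give positivity because of the negative $a_m$ (resp.\ $b_m$) term. I would rewrite each recurrence so that its positive terms dominate, for instance using $a_{m+1} \ge a_m$, proved simultaneously by induction. The base cases would be checked directly.
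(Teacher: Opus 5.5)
Your plan is sound, but it is a different route from the paper's. The paper never runs creative telescoping on the binomial sums. It extracts $G_0=1/\sqrt{\Delta(q^2)}$ and $G_1$ in closed form from $\sum_r\binom{2r}{r}t^r=(1-4t)^{-1/2}$, where $\Delta(x)=1-2x-x^2-2x^3+x^4=(1+x+x^2)(1-3x+x^2)$. It then writes $A(q)=qG_0-q^2G_1$ explicitly, checks by substitution that this algebraic function satisfies a particular third-order inhomogeneous ODE with leading coefficient $q^3\Delta(q^2)$, and reads off the coefficient of $q^{2m+9}$; $b_m$ is handled the same way.

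That is your ``alternative route'' plus one extra trick. The first-order ODE you would get for $\sum_m a_m x^m=\frac{1}{2x}\bigl(1+(x^2+3x-1)\Delta(x)^{-1/2}\bigr)$ has coefficients of degree about $6$ and leads to an order-$6$ recurrence. The paper trades differential order for coefficient degree so as to land exactly on the order-$4$ recurrence.

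What each approach buys: the paper's route makes verification pure rational-function arithmetic on an explicit algebraic function, with no summation boundaries or singular points to track. Yours works straight from the binomial-sum formula, is certified by a single rational identity, and does not need the closed form of $G_1$.

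Your worry that the stated operator might not be a telescoper can be put to rest. Any recurrence for $a_m$ has a characteristic polynomial vanishing at the reciprocals of the four branch points of the generating function, which are the roots of $\Delta$. So the minimal recurrence has order at least $4$. On the other hand, the Apagodu--Zeilberger bound for your summand is $4$. Hence the minimal telescoper is, up to a factor in $\mathbb{Q}(m)$, the unique order-$4$ annihilator of $a_m$, i.e.\ the stated operator, and your certificate $R$ does exist. The same argument applies to $b_m$.

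Two points need fixing before the certificate check is a proof.

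First, your remark that only $m/2\le n\le m$ contributes is false for $b_m$, and ``similarly for $b_m$'' hides a trap. For odd $m$ and $n=(m-1)/2$ one has $\binom{n}{m-n}=0$ but $\binom{n}{m-n-1}=1$, and the summand of $b_m$ equals $-1$ there; this is the whole of $b_1=-1$. So the term for $b_m$ must be $\binom{n}{m-n-1}^2\frac{3n-2m+1}{m-n}$. Writing it as $\binom{n}{m-n}^2$ times a rational function silently replaces $b_m$ by $b_m+1$ at odd $m$, and the stated recurrence already fails for that sequence at $m=1$ (left side $18$, right side $28$).

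Second, dividing by $t(m,n)$ verifies the telescoping relation only generically. Before summing over $n$ you must check it at the integer points where some $t(m+i,n)$ vanishes, or where $2n-m-i+1$ or the denominator of $R$ vanishes, as in any WZ-style proof.
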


\begin{proof}[Sketch of proof]
    We can write $\sum_{k \in \mathbb Z} G_k(q) z^k$ by
    \[\frac{1}{1 - q^2 - q^4 - q^3(z + z^{-1})} = \frac{1}{1 - q^2 - q^4} \sum_{n = 0}^\infty \left(\frac{q^3}{1 - q^2 - q^4}\right)^n (z + z^{-1})^n.\]
    Extracting the constant coefficient using the known identity
    \[\sum_{r\geq0}\binom{2r}{r}t^r=\frac1{\sqrt{1-4t}},\]
    yields
    \[G_0(q)=\frac{1}{\sqrt{(1 - q^2 - q^4)^2 - 4q^6}}.\]
    and similarly the linear coefficient is given by
    \[G_1(q)=\frac{1 - q^2 - q^4 - \sqrt{(1 - q^2 - q^4)^2 - 4q^6}}{2q^3\sqrt{(1 - q^2 - q^4)^2 - 4q^6}}.\]
    So that
    \begin{flalign*}
        A(q) := \sum_{m = 0}^\infty a_m q^{2m+1} &= qG_0(q) - q^2G_1(q) \\
        &= \frac{q^4+3q^2-1+\sqrt{(1 - q^2 - q^4)^2 - 4q^6}}{2q\sqrt{(1 - q^2 - q^4)^2 - 4q^6}}.
    \end{flalign*}
    It is then routine to verify that $A(q)$ satisfies the differential equation
    \begin{flalign*}
        0={}&q^3(q^8-2q^6-q^4-2q^2+1)A'''(q)\\
        &+10q^4(2q^6-3q^4-q^2-1)A''(q)\\
        &+2q(39q^8-30q^6+5q^4+30q^2-11)A'(q)\\
        &+4(11q^8+7q^6-6q^4-2q^2-4)A(q)\\
        &-2q(10q^4-12q^2-19).
    \end{flalign*}
    The lemma is then proven by substituting the proper derivatives and collecting the coefficient in front of $q^{2(m + 4) + 1}$. We get the expression for $b_m$ with analogous manipulations.
\end{proof}

We now prove Conjecture \ref{Parity Conjecture} in the following two propositions.

\begin{proposition}
The sequence $(a_m)$ is positive for all $m$ and increasing for $m \geq 2$.
\end{proposition}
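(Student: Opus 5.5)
We prove the proposition by strong induction on $m$, using the four-term recurrence for $a_m$ from the preceding lemma. The initial values come from the binomial formula
\[a_m = \sum_{n \ge 0} \binom{n}{m-n}\left(\binom{n}{m-n} - \binom{n}{m-n-1}\right).\]
The first few values are computed directly from this sum. For example, $a_0 = 1$ and $a_1 = 1$, and we continue up to $a_4$ or so. This checks positivity for the base cases and the increasing property $a_2 < a_3 < a_4$ (and $a_1\le a_2$ if needed).

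For the inductive step, assume $a_m, a_{m+1}, a_{m+2}, a_{m+3}$ are positive and that $a_m \le a_{m+1} \le a_{m+2} \le a_{m+3}$, which is valid for $m \ge 2$. We then want to show $a_{m+4} > a_{m+3}$. All coefficients on the right-hand side except the one on $a_m$ are positive for $m \ge 0$. The plan is to absorb the single negative term using monotonicity: replace $a_m$ by the larger $a_{m+1}$, so that
\[(m+5)(4m^2+28m+29)\,a_{m+4} \ \ge\ \bigl[4(2m^3+21m^2+58m+43) - (m+1)(4m^2+36m+61)\bigr]a_{m+1} + (\cdots)a_{m+2} + 2(4m^3+46m^2+142m+107)a_{m+3}.\]
The bracket equals $4m^3 + 44m^2 + 135m + 111 > 0$. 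So we may further bound $a_{m+1}$ and $a_{m+2}$ below by $a_m$-free quantities. The cheapest choice is to drop the $a_{m+1}$ term, or to use $a_{m+1}, a_{m+2} \ge 0$.

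It then remains to compare $2(4m^3+46m^2+142m+107)$, possibly together with the extra positive contributions, against the left coefficient $(m+5)(4m^2+28m+29) = 4m^3+48m^2+169m+145$. The $a_{m+3}$ coefficient alone is $8m^3+92m^2+284m+214$, which already exceeds the left coefficient for all $m \ge 0$. Hence
\[a_{m+4} \ge \frac{8m^3+92m^2+284m+214}{4m^3+48m^2+169m+145}\, a_{m+3} > a_{m+3} > 0.\]
This closes the induction for both positivity and strict increase. Positivity of the early terms $a_0, a_1$ (where monotonicity might fail, e.g.\ $a_1 \le a_2$ is not needed) is covered by the direct computation.

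The main obstacle is bookkeeping at the start of the induction. The step uses $a_m \le a_{m+1}$, so the first application (at $m=2$) requires $a_2 \le a_3$, and all indices must be covered by base cases. The step does not need $a_m \le a_{m+1}$ if instead we just use $a_{m+1} \ge a_m$ for $m \ge 2$. To start the induction at $m=0$ for positivity alone, we would instead check enough initial terms numerically (say $a_0,\dots,a_5$) so that the recurrence is only ever invoked with $m \ge 2$.

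The other point requiring care is that the recurrence itself is only sketched in the lemma. We take it as given, as the paper does.
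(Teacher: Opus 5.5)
Your proof is correct and is essentially the paper's argument. Both induct on four consecutive terms under the hypothesis $0<a_m\le a_{m+1}\le a_{m+2}\le a_{m+3}$, use $a_m\le a_{m+1}$ to neutralize the only negative coefficient (producing the same cubic $4m^3+44m^2+135m+111$), and use the fact that the $a_{m+3}$ coefficient exceeds $(m+5)(4m^2+28m+29)$ by $4m^3+44m^2+115m+69$. The paper merely phrases the last step as a lower bound on $a_{m+4}-a_{m+3}$ by a positive multiple of $a_m$, rather than your ratio bound.

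You should also record the base values $a_0=a_1=a_2=1$, $a_3=3$, $a_4=6$ explicitly. They show the hypothesis already holds at $m=0$, so you need neither extra initial terms nor the restriction $m\ge 2$.
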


\begin{proof}
    Note the base cases $a_0 = a_1 = a_2 = 1, a_3 = 3,$ and $a_4 = 6$ are all positive. Assume that for some $m \geq 0$ we have
    \[0 < a_m \leq a_{m+1} \leq a_{m+2} \leq a_{m+3}.\]
    Subtracting $(m + 5)(4m^2 + 28m + 29)a_{m + 3}$ from both sides of the recurrence yields
    \begin{align*}
    (m&+5)(4m^2+28m+29)(a_{m+4}-a_{m+3})\\
    =&\left(4(2m^3+21m^2+58m+43)a_{m+1} - (m+1)(4m^2+36m+61)a_m\right)\\
    &+(4m^3+44m^2+127m+117)a_{m+2} +(4m^3+44m^2+115m+69)a_{m+3} \\
    \geq& \left(4(2m^3+21m^2+58m+43) -(m+1)(4m^2+36m+61)\right)a_m \\
    =&(4m^3+44m^2+135m+111)a_m>0.
    \end{align*}
    Hence
    \[(m+5)(4m^2+28m+29)(a_{m+4}-a_{m+3})>0\]
    and since $(m+5)(4m^2+28m+29)$ is positive for $m > 0$, so is $a_{m+4}$. The proposition thus follows by induction.
\end{proof}

\begin{proposition}
    The sequence $(b_m)$ is positive for all $m \geq 5$ and increasing for $m \geq 4$. Additionally $b_1$ is the only negative term.
\end{proposition}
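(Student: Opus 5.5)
The plan mirrors the proof for $(a_m)$: compute a few initial values of $b_m$ directly, then run an induction on the recurrence for $b_m$ from the preceding lemma. The main work is rewriting that recurrence so every coefficient on the right is nonnegative.

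\textbf{Initial values.} I would compute these from the binomial formula of the preceding theorem,
\[b_m=\sum_{n\ge 0}\binom{n}{m-n-1}\left(\binom{n}{m-n}-\binom{n}{m-n-1}\right).\]
Only the few $n$ with $m-n-1\ge 0$ and $m-n\le n$ contribute. This gives
\[b_0=0,\quad b_1=-1,\quad b_2=b_3=b_4=0,\quad b_5=2,\quad b_6=6,\quad b_7=17,\quad b_8=48 .\]
For example, $b_1=-1$ comes from the single term $n=0$; it matches the fact that the only distinct-part partition of perimeter $2$ is $(2)$. These values settle the claims for $m\le 8$. In particular $b_1$ is the only negative term among $b_0,\dots,b_4$, and $b_4\le b_5\le b_6\le b_7$.

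\textbf{Induction hypothesis and rearrangement.} For $m\ge 4$ I would assume
\[0\le b_m\le b_{m+1}\le b_{m+2}\le b_{m+3},\qquad b_{m+3}>0.\]
The base case is $m=4$, i.e.\ the tuple $(0,2,6,17)$. I would subtract $(m+5)(m^2+2m-2)b_{m+3}$ from both sides of the recurrence. On the right, $b_{m+3}$ then carries the coefficient
\[(m+1)(m+4)(2m+3)-(m+5)(m^2+2m-2)=m^3+6m^2+15m+22>0 .\]
The only negative coefficient is $-(m+1)(m^2+4m+1)$ on $b_m$. I would absorb it into the $b_{m+1}$ term using $b_{m+1}\ge b_m\ge 0$, since
\[m(m+4)(2m+3)-(m+1)(m^2+4m+1)=m^3+6m^2+7m-1\ge 0 \quad (m\ge 1).\]
The $b_{m+2}$ term has the positive coefficient $(m+1)(m^2+5m+8)$ and nonnegative $b_{m+2}$. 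Altogether the right-hand side is at least $(m^3+6m^2+15m+22)\,b_{m+3}>0$.

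\textbf{Conclusion and main obstacle.} The factor $(m+5)(m^2+2m-2)$ is positive for $m\ge 1$, so $b_{m+4}>b_{m+3}>0$. This propagates the hypothesis to $m+1$. Induction then gives that $b_m$ is increasing for $m\ge 4$ and positive for $m\ge 5$; combined with the initial values, $b_1$ is the only negative term. The delicate point is the start of the induction. Unlike $(a_m)$, the sequence is not positive at the start: $b_4=0$, and the only negative term is $b_1$. So I must begin at $m=4$ rather than $m=0$, and carry strict positivity through the single top term $b_{m+3}$ rather than through $b_m$. Checking the base values carefully from the binomial sum is therefore the step I would double-check most.
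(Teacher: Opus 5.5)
Your proof is correct and follows the paper's argument. Both use the same induction on the fourth-order recurrence, subtract $(m+5)(m^2+2m-2)b_{m+3}$, and absorb the negative $b_m$ coefficient into the $b_{m+1}$ term; the only structural difference is that you start at $m=4$ with $(0,2,6,17)$, while the paper starts at $m=2$ with $(0,0,0,2)$.

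Your lower bound $(m^3+6m^2+15m+22)\,b_{m+3}>0$ is the right way to get strictness. The paper's stated bound $(m^3+6m^2+7m-1)\,b_m>0$ is not strict when $b_m=0$, which happens in its own first steps $m=2,3,4$.
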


\begin{proof}
    Note the base cases $b_1=-1, b_2 = b_3 = b_4 = 0,$ and $b_5 = 2$. Assume that for some $m \geq 2$ we have
    \[0 \leq b_m \leq b_{m+1} \leq b_{m+2} \leq b_{m+3}\]
    with $b_{m+3}>0$.
    Subtracting $(m+5)(m^2+2m-2)b_{m+3}$ from both sides of the recurrence yields
    \begin{align*}
    (m&+5)(m^2+2m-2)(b_{m+4}-b_{m+3}) \\
    =&\left(m(m+4)(2m+3)b_{m+1}-(m+1)(m^2+4m+1)b_m\right)\\
    &+(m+1)(m^2+5m+8)b_{m+2} +(m^3+6m^2+15m+22)b_{m+3} \\
    \geq& \left(m(m+4)(2m+3)-(m+1)(m^2+4m+1)\right)b_m \\
    =&(m^3+6m^2+7m-1)b_m > 0.
    \end{align*}
    Hence
    \[(m+5)(m^2+2m-2)(b_{m+4}-b_{m+3})>0\]
    and since $(m+5)(m^2+2m-2)$ is positive for $m \geq 2$, so is $b_{m+4}$. The proposition thus follows by induction. Note that for $m = 0$, the coefficient $m^3+6m^2+7m-1$ is negative.
\end{proof}

If we define $rd_0(n)$ as the coefficient of $q^n$ in the series $F_0(q)$, that is $rd_0(n)$ counts the number of partitions of perimeter $n$ into distinct parts such that there are an equal number of even parts and odd parts, we obtain the following combinatorial theorem.

\begin{theorem}
    For $n \geq 0$ we have
    \[rd_0(2n + 1) = \# \{H\text{'s} \text{ in peakless Motzkin paths of length }n\}\]
    and
    \[rd_0(2n + 2) = \# \{H\text{'s} + U\text{'s} \text{ in peakless Motzkin paths of length }n\}.\]
\end{theorem}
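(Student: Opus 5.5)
The plan is to read off $F_0$ from the decomposition used in the proof of the preceding theorem, turn both sides of the statement into explicit binomial sums, and then match them. On the partition side we have $F_k = qG_{k-1}+q^2G_{k+1}+q^3G_k$ and $G_{-1}=G_1$, so
\[F_0(q) = qG_1(q) + q^2G_1(q) + q^3G_0(q).\]
Since $G_0$ contains only even powers of $q$ and $G_1$ only odd powers, we can split by parity. The even part of $F_0$ is $qG_1$, and the odd part is $q^2G_1+q^3G_0$. Using the coefficient formulas already extracted for $G_0$ and $G_1$, this gives
\[rd_0(2n+2)=[q^{2n+1}]G_1=\sum_{k\ge0}\binom{k}{n+1-k}\binom{k}{n-k},\]
\[rd_0(2n+1)=\sum_{k\ge0}\binom{k}{n-k}\binom{k}{n-k-1}+\sum_{k\ge0}\binom{k}{n-1-k}^2 .\]

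On the path side, I would count peakless Motzkin paths of length $n$ with $u$ up steps, $u$ down steps and $h=n-2u$ horizontal steps, using Lemma \ref{Negative Raney}. Append a final $D$ to each path, giving a word of length $n+1$ with step sum $-1$. Because the sum is $-1$, no word can be periodic, and by Lemma \ref{Negative Raney} each cyclic class of $n+1$ such words contains exactly one rotation whose strict partial sums are nonnegative. These rotations are exactly the Motzkin paths followed by a $D$.

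The peakless condition becomes the cyclic requirement that every $U$ be followed by $U$ or $H$. This requirement is invariant under rotation, provided we also check that the appended final $D$ cannot create a forbidden $UD$ across the cut. It cannot: the path ends at height $0$, so its last step is not $U$. Counting linear words satisfying the cyclic condition works as follows:
\begin{itemize}
\item arrange the $h$ letters $H$ and the $u+1$ letters $D$ in $\binom{h+u+1}{h}$ ways;
\item distribute the $u$ letters $U$ as runs sitting immediately before the $H$'s, in $\binom{u+h-1}{u}$ ways. Care is needed at the cyclic wrap; I would handle this by working with cyclic words or by treating rotations uniformly.
\end{itemize}
Dividing by $n+1$ gives the number of paths. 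Multiplying by $h$ then counts paths with a marked $H$, which should simplify to roughly $\binom{n}{h-1}\binom{u+h-1}{u}$. Similarly, multiplying by $h+u$ counts paths with a marked $H$ or $U$. Summing over $u$ gives the right-hand sides of the statement as single binomial sums.

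The main obstacle is the final identity: the single-sum path expressions must equal the products of binomials above, including the two-term sum for $rd_0(2n+1)$. I would first attempt a direct reindexing, substituting $k=n-j$ or $k=u+h$ and applying Vandermonde's convolution. If that fails, there is a fallback. Compute the bivariate generating function $M(z,t)$ of peakless Motzkin paths with $H$ weighted by $t$, which satisfies $M=1+tzM+z^2(M-1)M$. Then compare $\partial_t M|_{t=1}$ (and the analogous derivative with $U$ weighted) against the closed forms of $G_0$ and $G_1$ from the recurrence lemma's sketch, with $q^2\mapsto z$. That reduces the theorem to an identity between algebraic functions, which can be checked by squaring out the radicals.
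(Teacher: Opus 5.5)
Your route is different from the paper's, and your left-hand side is correct: $F_0=(q+q^2)G_1+q^3G_0$, the parity split, and both binomial sums all check out. The path count, however, has a concrete error.

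Placing each $U$-run immediately before an $H$ in a linear word forbids a final $U$. So $\binom{h+u+1}{h}\binom{u+h-1}{u}$ counts only those words of your rotation-closed family that do not end in $U$. Each cyclic class has $n+1$ distinct rotations, and exactly $h+u+1=n-u+1$ of them end in a non-$U$ letter. You must therefore divide by $n-u+1$, not by $n+1$. For example, with $n=3$, $u=1$ the single class $\{UHDD,HDDU,DDUH,DUHD\}$ contains three such words, and your recipe gives $3/4$.

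The correct counts are:
\begin{itemize}
\item $\frac{1}{u+1}\binom{n-u}{u}\binom{n-u-1}{u}$ peakless paths;
\item $\binom{n-u}{u+1}\binom{n-u-1}{u}$ paths with a marked $H$ (not $\binom{n}{h-1}\binom{u+h-1}{u}$, which already gives $12\neq 10$ at $n=4$);
\item $\binom{n-u}{u}\binom{n-u}{u+1}$ paths with a marked $H$ or $U$.
\end{itemize}
With your formulas as written, the identity you planned to prove would fail.

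Once this is fixed, your ``main obstacle'' is immediate. With $k=n-u$, the marked $H$-or-$U$ count is, term by term, your sum for $rd_0(2n+2)$. For $rd_0(2n+1)$, Pascal's rule splits $\binom{n-u}{u+1}\binom{n-u-1}{u}$ into $\binom{n-u-1}{u}^2+\binom{n-u-1}{u+1}\binom{n-u-1}{u}$. Under $k=n-u-1$ these are exactly the $k$-th terms of your two sums.

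Your fallback is independent of the faulty count, and it is correct and short. From $z^2M^2+(tz-z^2-1)M+1=0$ you get $\partial_tM|_{t=1}=zM/\sqrt{\Delta}$. Weighting $U$ steps as well gives $(M-1)/\sqrt{\Delta}$. Here $\Delta=(1-z+z^2)^2-4z^2=(1-z-z^2)^2-4z^3$, and these two series equal $qG_1+q^2G_0$ and $G_1/q$ at $z=q^2$ with no squaring needed.

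The paper proves the theorem differently. It encodes a partition directly as a word: parts of one parity become $U$/$H$ letters, and parts of the other parity become runs of $D$ via stars and bars. It then uses Lemma~\ref{Negative Raney} to build an explicit bijection onto peakless paths with a marked step, which explains the equality rather than verifying it. Your approach yields closed formulas, but it depends on the generating function $F$. In the fallback form it also depends on the closed forms for $G_0,G_1$, which the paper only sketches; they follow from the central binomial series, so this is acceptable.

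Finally, counting partitions of perimeter $2n+1$ with $k$ odd and $k$ even parts directly gives $\binom{n-k+1}{k}\binom{n-k}{k-1}$. This is your corrected marked-$H$ count at $u=k-1$, and it is precisely the refinement that the paper's bijection realizes.
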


We note that the theorem implies that $rd_0(2n+2) - rd_0(2n + 1)$ counts the number of $U$'s in peakless Motzkin paths of length $n$ and thus is a positive quantity. We prove the theorem by defining a bijection between the set of the partitions counted on the left and the steps in the Motzkin paths counted on the right.

\begin{proof}
    We first take a partition, write it as a word in the letters $\{U, H, D\}$, cyclically permute the word using Lemma \ref{Raney}, and then delete the final letter which will be guaranteed to be a $D$, and obtain a peakless Motzkin path of the desired length. In the other direction we take a peakless Motzkin path with a designated part that we are counting, append a final $D$, cyclically rotate to put the designated part at the start, and finally we can undo the encoding process to get back parts of the partition forming a bijection.
    
    Let $\lambda$ be a partition with perimeter $2n + 1$ into $k$ distinct even parts and $k$ distinct odd parts. Notice that
    \[P(\lambda) := \lambda_1 + \ell(\lambda) - 1 = \lambda_1 + 2k - 1 = 2n + 1\]
    which readily implies that $\lambda_1 = 2(n - k + 1)$ is even. Let $W$ be an empty word. We delete $\lambda_1$ and append $H^*$, the designated $H$ step that we are counting, to $W$. Now for $1 \leq j \leq n - k$, if $2j$ is a part of $\lambda$ append a $U$ to $W$ and if not append an $H$. Notice that there are exactly $k - 1$ copies of the letter $U$ and $n - 2k + 1$ copies of the letter $H$. We will use the technique of stars and bars to keep track of the lengths of successive odd parts that appear and these will become the $D$ steps in our word. For $1 \leq j \leq n - k + 1$ if $2j - 1$ is a part, put a star and if it is not put a bar. We can now write the lengths of the number of stars as a weak composition of $k$ say $c = (c_1, c_2, \ldots, c_{n - 2k + 2})$ where $c_i$ is the number of stars, possibly 0, between $(i-1)$th bar and the $i$th. Starting at the designated $H^*$, put $c_i$ copies of the letter $D$ after the $i$th $H$. Notice that there are $k$ copies of $D$ and the total length of the word is $n + 1$. So if we define a sequence $b = (b_1, b_2, \ldots, b_{n + 1})$ such that $b_i$ is equal to $1, 0,$ or $-1$ if the $i$th letter of $W$ is a $U, H,$ or $D$ respectively, the total sum is $-1$ so we can apply Lemma \ref{Negative Raney} to obtain a cyclic shift of $b$ with all strict partial sums non-negative. This, in turn corresponds to a cyclic shift of $W$ such that we end with a $D$ step and no previous step goes below the $x$-axis. Delete this final $D$ to obtain a Motzkin path of length $n$. This Motzkin path is peakless since each $D$ step was inserted after an $H$ or a $D$.

    In the reverse direction, take a peakless Motzkin path of length $n$ and designate one of the $H$ steps to be counted as $H^*$. We append a $D$ step at the end and define $k$ to be the number of $D$ steps. Cyclically rotate $H^*$ to the front of the word. Create the weak composition $c = (c_1, c_2, \ldots, c_{n - 2k + 2})$ by defining $c_i$ to be the number of $D$ steps after the $i$th $H$. Define $\mu$ to be an empty partition and starting at 1 append to $\mu$ the smallest $c_1$ consecutive odd parts, skip the next odd part; then append the next $c_2$ consecutive odd parts, skip the next odd part et cetera. Continue this process for each $c_i$ through $c_{n - 2k + 1}$ until $\mu$ has all $k$ odd parts. Now delete all $D$ steps and append to $\mu$ a part of size $2j$ if the $j$th letter after $H^*$ is a $U$. Finally append to $\mu$ a part of size $2(n - k + 1)$. Then $\mu$ is a partition into distinct parts with perimeter $2n + 1$ that has an equal number of even parts and odd parts.
    
    For the even perimeter case $2n + 2$, let $\lambda$ be a partition with perimeter $2n + 2$ into $k$ distinct even parts and $k$ distinct odd parts. In this case,
    \[P(\lambda) := \lambda_1 + \ell(\lambda) - 1 = \lambda_1 + 2k - 1 = 2n + 2,\]
    which implies that $\lambda_1 = 2n - 2k + 3 = 2(n - k + 1) + 1$ is odd. Let $W$ be an empty word. Now for $1 \leq j \leq n - k + 1$, if $2j - 1$ is a part of $\lambda$ append a $U$ to $W$ and if not append an $H$ in either case designate the first letter. Notice that $W$ has $k - 1$ copies of the letter $U$ and $n - 2k + 2$ copies of the letter $H$. We again use stars and bars to keep track of the $k$ even parts. For $1 \leq j \leq n - k + 1$, if $2j$ is a part put a star and if it is not put a bar. We again define a weak composition $c = (c_1, c_2, \ldots, c_{n - 2k + 2})$ of $k$, where $c_i$ is the number of stars between the $(i-1)$th bar and the $i$th bar. Starting at the first $H$ step in $W$, put $c_i$ copies of the letter $D$ after the $i$th $H$. Notice that there are $k$ copies of $D$, $k - 1$ copies of $U$, and $n - 2k + 2$ copies of $H$, so the total length of the word is $n + 1$. Again we can apply Lemma \ref{Negative Raney} to obtain a unique cyclic shift of $W$ such that the strict partial sums are non-negative and the word ends with a $D$ step. Deleting this final $D$ yields a peakless Motzkin path of length $n$ with a designated step of either $U$ or $H$.
    
    In the reverse direction, take a peakless Motzkin path of length $n$ and designate one of the $U$ or $H$ steps to be counted as $U^*$ or $H^*$, respectively. We append a $D$ step at the end and define $k$ to be the number of $D$ steps. Cyclically rotate the word to bring the designated step to the front. Create the weak composition $c = (c_1, c_2, \ldots, c_{n - 2k + 2})$ by defining $c_i$ to be the number of $D$ steps after the $i$th $H$. Define $\mu$ to be an empty partition, and starting at 2 append the smallest $c_1$ even parts and continue like in the odd perimeter case. Next, delete all $D$ steps from the rotated word to obtain a word of length $n - k + 1$. For $1 \leq j \leq n - k + 1$, if the $j$th letter is a $U$, append $2j - 1$ to $\mu$, giving $k - 1$ odd parts. Lastly append the part $2n - 2k + 3$ to $\mu$. Then $\mu$ is a partition into distinct parts with perimeter $2n + 2$ having an equal number of even parts and odd parts.
\end{proof}

\begin{example}
    Consider the partition $\lambda = (20, 15, 14, 13, 12, 9, 8, 7, 5, 4, 2, 1)$ which has perimeter 31. The largest part is even so is becomes our designated $H^*$. Appending the steps corresponding to the even parts we have
    \[W = H^*UUHUHUUHH.\]
    The odd parts have the corresponding stars and bars
    \[*|***|**||\]
    which corresponds to the composition $(1, 3, 2, 0, 0)$. We then append the corresponding steps of size $D$ to obtain
    \[W = H^*DUUHDDDUHDDUUHH.\]
    The corresponding cyclic rotation is given by
    \[UUHHH^*DUUHDDDUHD[D].\]
    Finally deleting the bracketed $D$ gives a peakless Motzkin path of length 15 as desired.
\end{example}

\begin{example}
    Consider the Motzkin path of length 12 with designated $U^*$ given by $UHUHHDHU^*HHDD$. We append a $D$ to the end and cycle the designated part to the start to obtain
    \[U^*HHDDDUHUHHDH.\]
    The $D$'s correspond to the stars and bars diagram
    \[|***|||*|\]
    and thus $\mu$ has even parts of size $4, 6, 8,$ and $14$. Deleting the $D$'s gives
    \[U^*HHUHUHHH\]
    and so $\mu$ has odd parts of size $1, 7, 11$ and the largest part of size $2n - 2k + 3 = 24 - 8 + 3 = 19$. Thus $\mu = (19, 14, 11, 8, 7, 6, 4, 1)$.
\end{example}

\end{document}